\documentclass[11pt,english]{amsart}
\usepackage[T1]{fontenc}
\usepackage[latin9]{inputenc}
\usepackage{geometry}
\geometry{verbose,tmargin=3cm,bmargin=3cm,lmargin=2.5cm,rmargin=2.5cm}
\setlength{\parskip}{\medskipamount}
\usepackage[active]{srcltx}
\usepackage{float}
\usepackage{amstext}
\usepackage{amssymb}
\usepackage{graphicx}
\usepackage{comment}
\usepackage{color,soul}
\usepackage{hyperref}

\makeatletter

\usepackage{amsthm}\usepackage{amstext}


\usepackage{enumitem}
\theoremstyle{plain}
\newtheorem{thm}{\protect\theoremname}
 \theoremstyle{definition}
 \newtheorem*{defn*}{\protect\definitionname}
  \theoremstyle{remark}
  \newtheorem*{rem*}{\protect\remarkname}
  \theoremstyle{plain}
  \newtheorem{cor}[thm]{\protect\corollaryname}
  \theoremstyle{plain}
  \newtheorem{lem}[thm]{\protect\lemmaname}
  \theoremstyle{plain}
  \newtheorem{prop}[thm]{\protect\propositionname}
  \theoremstyle{definition}
  \newtheorem{example}[thm]{\protect\examplename}

\numberwithin{thm}{section}
\numberwithin{equation}{section}

\newcommand{\Des}{{\rm Des}\,}
\newcommand{\des}{{\rm des}\,}
\newcommand{\la}{\lambda}


\newcommand{\asc}{{\rm asc}\,}
\newcommand{\s}{\mathbf{s}}

\newcommand{\Z}{\mathbb{Z}}

\newcommand{\Asc}{{\rm Asc}\,}

\newcommand{\e}{\mathbf{e}}
\newcommand{\bv}{\mathbf{v}}
\newcommand{\Co}{\mathbf{C}}

\newcommand{\Qn}{Q_{n}^{(\mathbf{s})}}
\newcommand{\Qnn}{Q_{n-1}^{(\mathbf{s})}}
\newcommand{\In}{I_{n}^{(\mathbf{s})}}
\newcommand{\Inn}{I_{n-1}^{(\mathbf{s})}}

\newcommand{\ax}{\left[s_{n}\right]_{x}}

\newcommand{\anx}{\left[n\right]_{x}}


\usepackage{babel}
\providecommand{\corollaryname}{Corollary}
  \providecommand{\definitionname}{Definition}
  \providecommand{\examplename}{Example}
  \providecommand{\lemmaname}{Lemma}
  \providecommand{\propositionname}{Proposition}
  \providecommand{\remarkname}{Remark}
\providecommand{\theoremname}{Theorem}

\usepackage{babel}

\makeatother

\usepackage{babel}

\begin{document}

\title{Coefficients of the inflated Eulerian polynomial}

\author{Juan S. Auli}
\address{Department of Mathematics\\
         Dartmouth College\\
         Hanover, NH 03755\\
         U.S.A.}
\email{juan.s.auli.gr@dartmouth.edu}

\author{Ron Graham}
\address{Department of Computer Science and Engineering\\
UC San Diego\\
La Jolla, CA 92093-0404\\
U.S.A.}
\email{graham@ucsd.edu}

\author{Carla D. Savage}
\address{Department of Computer Science\\
         NC State University\\
         Raleigh, NC 27695-8206\\
         U.S.A.}
\email{savage@ncsu.edu}

\date{\today}
\begin{abstract}
It follows from work of Chung and Graham that for a certain family
of polynomials $T_{n}(x)$, derived from the descent statistic on permutations,
the coefficient sequence of $T_{n-1}(x)$ coincides with
that of the polynomial $T_{n}(x)/(1+x+\cdots+x^{n-1})$. We observed computationally that the inflated $\mathbf{s}$-Eulerian
polynomial $Q_{n}^{(\mathbf{s})}(x)$, which satisfies $Q_{n}^{(\mathbf{s})}(x) = T_{n}(x)$ when $\mathbf{s}=(1,2,\ldots,n)$, also satisfies this property for many sequences $\mathbf{s}$.
In this work we characterize those sequences $\mathbf{s}$ for which the coefficient sequence of $Q_{n-1}^{(\mathbf{s})}(x)$ coincides
with that of the polynomial $Q_{n}^{(\mathbf{s})}(x)/\left(1+x+\cdots+x^{s_{n}-1}\right)$.
In particular, we show that all nondecreasing sequences satisfy this
property.

We also  settle a conjecture of Pensyl and Savage by showing that the inflated {\bf s}-Eulerian polynomials are unimodal for
all choices of positive integer sequences ${\bf s}$.  In addition, we determine when these polynomials are palindromic and show our characterization is equivalent to another of Beck, Braun, K{\"o}ppe, Savage, and Zafeirakopoulos.
\end{abstract}

\keywords{Eulerian polynomial, lecture hall partition, inversion sequence, lattice point enumeration, unimodal polynomial, palindromic polynomial.}

\subjclass[2010]{Primary 05A17; Secondary 05A05, 52B20.}

\maketitle

\section{\label{sec:Introduction}Introduction}

We present the solution to a problem on $\mathbf{s}$-lecture hall
partitions that was motivated by the work
of Chung and Graham~\cite{ChungGraham2013} on the maxdrop statistic in permutations.

Let $S_{n}$ be the set of permutations of $[n]=\{1,2,\ldots,n\}$.
Given a permutation $\pi=\pi_{1}\pi_{2}\ldots\pi_{n}$ in $S_{n}$,
define the set of \emph{descents} of $\pi$ to be $\Des\pi=\{i\in[n-1]\ |\ \pi_{i}>\pi_{i+1}\}$ and denote $\des\pi=|\Des\pi|$.
The following result is a consequence of Theorems 4.1 and 4.2 in~\cite{ChungGraham2013}.

\begin{prop}[Chung, Graham~\cite{ChungGraham2013}]\label{prop:ChungGraham}
Let $T_{n}(x)$ be defined by
\[
T_{n}(x)=\sum_{\pi\in S_{n}}x^{n\left(\des\pi-1\right)+\pi_{n}}.
\]
Then
\[
\frac{T_{n}(x)}{1+x+\cdots+x^{n-1}}=\sum_{\pi\in S_{n-1}}x^{n\left(\des\pi-1\right)+\pi_{n-1}+1}.
\]
\end{prop}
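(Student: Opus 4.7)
The plan is to prove the identity in two steps: first show that $1+x+\cdots+x^{n-1}$ divides $P_n(x)$, then identify the quotient through a bijective calculation.

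For divisibility, I would substitute any $n$-th root of unity $\omega\neq 1$. Since $\omega^n=1$,
$$P_n(\omega) = \sum_{\pi\in S_n}\omega^{n\des(\pi)+\pi_n} = \sum_{\pi\in S_n}\omega^{\pi_n} = (n-1)!\sum_{a=1}^{n}\omega^a = 0,$$
because each $a\in[n]$ occurs as the last entry of exactly $(n-1)!$ permutations and $\sum_{a=1}^{n}\omega^a=0$. As the simple roots of $1+x+\cdots+x^{n-1}$ are precisely these $\omega$, this polynomial divides $P_n(x)$.

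For the quotient, I would use the bijection $\Phi\colon S_n\to S_{n-1}\times[n]$ defined by $\Phi(\pi)=(\sigma,a)$, where $a=\pi_n$ and $\sigma\in S_{n-1}$ is the standardization of the prefix $(\pi_1,\ldots,\pi_{n-1})$ (decrement each entry exceeding $a$ by $1$). Since standardization preserves relative order, descents of $\pi$ at positions $1,\ldots,n-2$ coincide with those of $\sigma$, and a short check shows $\pi_i>a$ iff $\sigma_i\geq a$, so position $n-1$ is a descent of $\pi$ precisely when $\sigma_{n-1}\geq a$. Hence $\des(\pi)=\des(\sigma)+[a\leq\sigma_{n-1}]$, and collecting the contributions to $P_n(x)$ from the $n$ preimages of a fixed $\sigma$ gives
$$x^{n\des(\sigma)}\left(\sum_{a=\sigma_{n-1}+1}^{n} x^a + \sum_{a=1}^{\sigma_{n-1}} x^{n+a}\right) = x^{n\des(\sigma)+\sigma_{n-1}+1}(1+x+\cdots+x^{n-1}),$$
because the two inner ranges concatenate into the block of $n$ consecutive exponents $\sigma_{n-1}+1,\sigma_{n-1}+2,\ldots,\sigma_{n-1}+n$. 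Summing over $\sigma\in S_{n-1}$ and canceling the factor $1+x+\cdots+x^{n-1}$ yields the explicit quotient claimed on the right side of the proposition.

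The main obstacle is the bookkeeping in the third step: verifying that the inequality $\pi_{n-1}>\pi_n$ translates, under standardization, into $\sigma_{n-1}\geq a$, and then recognizing that the two halves of the inner sum (the cases $a\leq\sigma_{n-1}$ and $a>\sigma_{n-1}$) fit together into a single geometric series of length $n$. Once this is in place, both the divisibility and the quotient identification drop out at once.
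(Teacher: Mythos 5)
Your method is essentially the paper's own. Theorem \ref{ThmSavage} is proved by grouping the $s_{n}$ extensions $(\mathbf{e},k)$ of each $\mathbf{e}\in I_{n-1}^{(\mathbf{s})}$ and observing that their contributions form a block of $s_{n}$ consecutive powers of $x$; specializing to $\mathbf{s}=(1,2,\ldots,n)$ and conjugating by the bijection $\phi$ (under which deleting the last entry of $\pi$ and standardizing corresponds to truncating the inversion sequence) gives exactly your grouping of the $n$ preimages of a fixed $\sigma\in S_{n-1}$. Your root-of-unity argument for divisibility is correct but redundant, since the block identity already displays the factor $1+x+\cdots+x^{n-1}$ explicitly.

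However, your final sentence overclaims. Your (correct) bookkeeping actually yields
\[
\frac{P_{n}(x)}{1+x+\cdots+x^{n-1}}=\sum_{\sigma\in S_{n-1}}x^{n\cdot\des(\sigma)+\sigma_{n-1}+1},
\]
which is $x$ times the right-hand side stated in the proposition, not that right-hand side itself. The case $n=2$ makes this concrete: $P_{2}(x)=x^{2}+x^{3}$, so the quotient is $x^{2}$, whereas $\sum_{\pi\in S_{1}}x^{2\des(\pi)+\pi_{1}}=x$. The same extra factor of $x$ appears when one translates the paper's Corollary \ref{CorChungGraham} back to permutations via $e_{n-1}=(n-1)-\pi_{n-1}$ and $e_{n}=n-\pi_{n}$, so the mismatch lies in the normalization of the statement as printed rather than in your argument. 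Still, you cannot assert that your block sum ``yields the explicit quotient claimed'' when it differs from it by one power of $x$: either record the quotient you actually obtain, or note explicitly that the stated exponent $n\cdot\des(\pi)+\pi_{n-1}$ must be read as $n\cdot\des(\pi)+\pi_{n-1}+1$ for the identity to hold.
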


\begin{example}\label{exa:ChungGraham_coincide} Using Proposition~\ref{prop:ChungGraham}, we may compute:
\begin{align*}
T_{3}(x) &= \mathbf{1}x^{4}+\mathbf{1}x^{3}+\mathbf{2}x^{2}+\mathbf{1}x+1,\textnormal{ and}\\
\frac{T_{4}(x)}{1+x+x^{2}+x^{3}} &= \mathbf{1}x^{6}+\mathbf{1}x^{4}+\mathbf{2}x^{3}+\mathbf{1}x^{2}+1.
\end{align*}
\end{example}

By Example~\ref{exa:ChungGraham_coincide}, the coefficient sequences (i.e., the sequence of nonzero coefficients) of the polynomials $T_{3}(x)$ and $T_{4}(x)/\left(1+x+x^{2}+x^{3}\right)$ coincide. Chung and Graham~\cite{ChungGraham2013} use Proposition~\ref{prop:ChungGraham} to show that this holds for $n\geq 2$. Namely, they prove the following result.

\begin{cor}[Chung, Graham~\cite{ChungGraham2013}]\label{cor:ChungGraham} The coefficient sequence of the polynomial $T_{n-1}(x)$ coincides with that of ${T_{n}(x)/(1+x+\cdots+x^{n-1})}$.
\end{cor}

We will define the polynomial $\Qn(x)$, for positive integer sequences $\s=(s_{1},s_{2},\ldots,s_{n})$, of which $T_{n}(x)$ is a particular case. Indeed, we note that $T_{n}(x)=Q_{n}^{(1,2,\ldots,n)}(x)$ in Section~\ref{sec:Reversible-sequences}. We will show that
\[
\frac{Q_{n}^{(\mathbf{s})}(x)}{1+x+\cdots+x^{s_{n}-1}}
\]
is a polynomial and provide a combinatorial interpretation for it, see Theorem~\ref{ThmSavage}. In fact, Proposition~\ref{prop:ChungGraham} is a particular instance of Theorem~\ref{ThmSavage}. Furthermore, we will show that Corollary~\ref{cor:ChungGraham} is a special case of one of our main results, Theorem~\ref{thm:Main}, when $\mathbf{s}=(1,2,\ldots,n)$.

Given a sequence $\mathbf{s}=(s_{1},s_{2},\ldots,s_{n})$ of positive
integers, we define the $n$-dimensional $\mathbf{s}$-\emph{lecture hall cone}
\[
\Co_{n}^{(\mathbf{s})}=\left\{ \lambda\in\mathbb{R}^{n}\mid0\leq\frac{\lambda_{1}}{s_{1}}\leq\frac{\lambda_{2}}{s_{2}}\leq\dots\leq\frac{\lambda_{n}}{s_{n}}\right\} .
\]
The lattice points $\Co_{n}^{(\mathbf{s})}\cap\mathbb{Z}^{n}$ are called
$\mathbf{s}$-\emph{lecture hall partitions} (into $n$ parts). The cone $\Co_{n}^{(\s)}$ is generated by the vectors $\left\{ \bv_{i}=[0,\dots,0,s_{i},\dots,s_{n}]:1\leq i\leq n\right\} $.
The (half open) \emph{fundamental parallelepiped} associated to the generating set $V_{n}(\s)=\{\bv_{1},\bv_{2},\ldots,\bv_{n}\}$ is
\[
\Pi_{n}^{(\mathbf{s})}=\left\{ \sum_{i=1}^{n}\alpha_{i}\bv_{i}\mid0\leq\alpha_{i}<1\right\}.
\]

The generating function for the lattice points in $\Co_{n}^{(\mathbf{s})}$
can be computed from its fundamental parallelepiped and generators
as
\begin{equation}
\sum_{\la\in \Co_{n}^{(\mathbf{s})}\cap\mathbb{Z}^{n}}x^\la=\frac{\sum_{\la\in\Pi_{n}^{(\mathbf{s})}\cap\mathbb{Z}^{n}}x^{\la}}{\prod_{i=1}^{n}(1-x^{\bv_{i}})},\label{eqn:lpe}
\end{equation}
where $x^{\la}=x_{1}^{\la_{1}}x_{2}^{\la_{2}}\cdots x_{n}^{\la_{n}}$, see, for instance,~\cite[p.~40]{miller2007geometric}. That is
\[
\sum_{\la\in \Co_{n}^{(\mathbf{s})}\cap\mathbb{Z}^{n}}x_{1}^{\la_{1}}x_{2}^{\la_{2}}\cdots x_{n}^{\la_{n}}=\frac{\sum_{\la\in\Pi_{n}^{(\mathbf{s})}\cap\mathbb{Z}^{n}}x_{1}^{\la_{1}}x_{2}^{\la_{2}}\cdots x_{n}^{\la_{n}}}{\prod_{i=1}^{n}(1-x_{i}^{s_{i}}\cdots x_{n}^{s_{n}})}.
\]
Setting $x_{1}=\dots=x_{n-1}=1$ and $x_{n}=x$ gives
\begin{equation}
\sum_{\la\in \Co_{n}^{(\mathbf{s})}\cap\mathbb{Z}^{n}}x^{\la_{n}}=\frac{\sum_{\la\in\Pi_{n}^{(\mathbf{s})}\cap\mathbb{Z}^{n}}x^{\la_{n}}}{(1-x^{s_{n}})^{n}}.\label{eqnx:lpe}
\end{equation}

Define
\begin{equation}
Q_{n}^{(\mathbf{s})}(x)=\sum_{\lambda\in\Pi_{n}^{(\mathbf{s})}\cap\mathbb{Z}^{n}}x^{\lambda_{n}}. \label{eq:FirstDefQ}
\end{equation}
Corollary~\ref{cor:PensylSavage} shows that $Q_{n}^{(\mathbf{s})}(x)$ is the $n$th \emph{inflated} $\mathbf{s}$-\emph{Eulerian polynomial} associated to
$\Co_{n}^{(\mathbf{s})}$, which was introduced by Pensyl and Savage~\cite{PensylSavage2013}. For this reason, henceforth, we refer to $\Qn (x)$ as the $n$th inflated $\s$-Eulerian polynomial, or simply as the inflated $\s$-Eulerian polynomial because the index is clear.

We observed that for particular (infinite) sequences $\mathbf{s}=(s_{1},s_{2},\ldots)$
and positive integers $n$, the coefficient sequence of
$Q_{n-1}^{(s_1,s_2,\ldots,s_{n-1})}(x)$ coincides with that of the polynomial
\[
P_{n-1}^{(s_1,s_2,\ldots,s_n)}(x)=\frac{Q_{n}^{(s_1,s_2,\ldots,s_n)}(x)}{\ax}.
\]
Here we use the notation
\[
[a]_{y}=1+y+\cdots+y^{a-1},
\]
where $a$ is a positive integer and $y$ is a variable. This surprising fact led us to consider the problem of characterizing all (infinite) sequences $\mathbf{s}$ for which the respective coefficient sequences of these polynomials coincide, for all $n$. We call such sequences \emph{contractible}.

Henceforth, ${\mathbf{s}=(s_{1},s_{2},\ldots)}$ will denote an infinite sequence of positive integers, unless otherwise stated. In general, we abuse notation for the sake of convenience and write $\mathbf{s}$ in definitions reserved for finite sequences meaning the appropriately truncated version of $\mathbf{s}$. In particular, we may write $Q_{n}^{(\mathbf{s})}(x)$ and $P_{n}^{(\mathbf{s})}(x)$ in place of $Q_{n}^{(s_{1},s_{2},\ldots,s_{n})}(x)$  and $P_{n}^{(s_{1},s_{2},\ldots,s_{n})}(x)$, respectively.

\begin{example}\label{exa:fibonacci}Let $\mathbf{s}$ be the Fibonacci sequence, which is defined by $s_{1}=1$, $s_{2}=1$, and  $s_{i}=s_{i-1}+s_{i-2}$ for $i\geq 3$. Then
the $n$th inflated $\mathbf{s}$-Eulerian polynomials for the first
few $n$ are as follows:
\begin{align*}
Q_{5}^{(\s)}(x) & =\mathbf{1}x^{11}+\mathbf{1}x^{10}+\mathbf{2}x^{9}+\mathbf{4}x^{8}+\mathbf{4}x^{7}+\mathbf{4}x^{6}+\mathbf{4}x^{5}+\mathbf{4}x^{4}+\mathbf{2}x^{3}+\mathbf{2}x^{2}+\mathbf{1}x+\mathbf{1},\\
Q_{4}^{(\s)}(x) & =\mathbf{1}x^{4}+\mathbf{1}x^{3}+\mathbf{2}x^{2}+\mathbf{1}x+\mathbf{1},\\
Q_{3}^{(\s)}(x) & =\mathbf{1}x+\mathbf{1},\textrm{ and}\\
Q_{2}^{(\s)}(x) & =\mathbf{1}.
\end{align*}

On the other hand, the polynomials $P_{n}^{(\mathbf{s})}(x)$
for the first few $n$ are as follows:
\begin{align*}
P_{5}^{(\s)}(x) & =\mathbf{1}x^{18}+\mathbf{1}x^{16}+\mathbf{2}x^{15}+\mathbf{4}x^{13}+\mathbf{4}x^{12}+\mathbf{4}x^{10}+\mathbf{4}x^{8}+\mathbf{4}x^{7}+\mathbf{2}x^{5}+\mathbf{2}x^{4}+\mathbf{1}x^{2}+\mathbf{1},\\
P_{4}^{(\s)}(x) & =\mathbf{1}x^{7}+\mathbf{1}x^{5}+\mathbf{2}x^{4}+\mathbf{1}x^{2}+\mathbf{1},\\
P_{3}^{(\s)}(x) & =\mathbf{1}x^{2}+\mathbf{1},\textrm{ and}\\
P_{2}^{(\s)}(x) & =\mathbf{1}.
\end{align*}
\end{example}

We prove that all nondecreasing sequences are contractible.
Moreover, we characterize contractible sequences as follows.

\begin{thm}\label{thm:Main}Let $\mathbf{s}$ be an infinite sequence of positive integers. Then $\mathbf{s}$ is contractible if and only if either $(s_{i})_{i=3}^{\infty}$
is nondecreasing; or there exists $N\geq3$ such that $(s_{i})_{i=N}^{\infty}$
is nondecreasing, $s_{N}=s_{N-1}-1$ and $s_{j}=1$ for $j=1,2,\ldots,N-2$.\end{thm}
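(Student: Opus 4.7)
The plan is to first reduce contractibility to a concrete arithmetic injectivity question via an explicit description of the lattice points in $\Pi_n^{(\mathbf{s})}$, then carry out the case analysis.

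I would begin by computing how the lattice points of $\Pi_n^{(\mathbf{s})}$ sit over those of $\Pi_{n-1}^{(\mathbf{s})}$. Writing $\lambda = \sum_{i=1}^n \alpha_i v_i \in \Pi_n^{(\mathbf{s})}$ with $\alpha_i \in [0,1)$, one observes that $\lambda_j = s_j(\alpha_1 + \cdots + \alpha_j)$ depends only on $\alpha_1,\ldots,\alpha_j$. Hence projection to the first $n-1$ coordinates gives a lattice point $\lambda' \in \Pi_{n-1}^{(\mathbf{s})}\cap\mathbb{Z}^{n-1}$, and given such $\lambda'$, the extensions $\lambda_n = s_n\lambda'_{n-1}/s_{n-1} + s_n\alpha_n \in \mathbb{Z}$ with $\alpha_n \in [0,1)$ are exactly the $s_n$ consecutive integers $\lceil s_n\lambda'_{n-1}/s_{n-1}\rceil, \lceil s_n\lambda'_{n-1}/s_{n-1}\rceil+1, \ldots, \lceil s_n\lambda'_{n-1}/s_{n-1}\rceil + s_n - 1$. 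This yields the identity
\[
\frac{Q_n^{(\mathbf{s})}(x)}{1+x+\cdots+x^{s_n-1}} \;=\; \sum_{k\ge 0} [x^k]Q_{n-1}^{(\mathbf{s})}(x)\cdot x^{\phi_n(k)}, \qquad \phi_n(k) := \Bigl\lceil \tfrac{s_n k}{s_{n-1}}\Bigr\rceil,
\]
from which polynomiality of $Q_n^{(\mathbf{s})}(x)/(1+x+\cdots+x^{s_n-1})$ is immediate and contractibility at step $n$ becomes equivalent to $\phi_n$ being injective on the support of $Q_{n-1}^{(\mathbf{s})}(x)$.

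For the sufficiency direction, I would dispense with the tail quickly: whenever $s_n \geq s_{n-1}$, the map $\phi_n$ has slope at least $1$ and is strictly increasing on $\mathbb{Z}_{\geq 0}$, hence injective on every subset, which handles every index from the point where the nondecreasing tail of $\mathbf{s}$ starts. For the remaining small indices I would use the recursion to describe $Q_n^{(\mathbf{s})}$ explicitly. In case~B, iterating with $s_1 = \cdots = s_{N-2} = 1$ forces $Q_j^{(\mathbf{s})}(x) = 1$ for $j < N-1$ and $Q_{N-1}^{(\mathbf{s})}(x) = 1 + x + \cdots + x^{s_{N-1}-1}$; a short computation then gives $\phi_N(k) = \lceil (s_{N-1}-1)k/s_{N-1}\rceil = k$ on $\{0, 1, \ldots, s_{N-1}-1\}$, which is injective. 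Case~A reduces to the same type of arithmetic observation once the support of $Q_2^{(\mathbf{s})}$ is identified.

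For the necessity direction, suppose $\mathbf{s}$ satisfies neither structural condition. I would pick the smallest $n$ at which the structural requirement fails and show that $\phi_n$ fails to be injective on $\mathrm{supp}(Q_{n-1}^{(\mathbf{s})})$. The recursion yields $\mathrm{supp}(Q_{n-1}^{(\mathbf{s})}) = \bigcup_{k\in\mathrm{supp}(Q_{n-2}^{(\mathbf{s})})}\{\phi_{n-1}(k),\phi_{n-1}(k)+1,\ldots,\phi_{n-1}(k)+s_{n-1}-1\}$, so whenever $s_{n-1} \geq 2$ this support contains intervals of $s_{n-1}$ consecutive integers. Since $s_n < s_{n-1}$ at the offending index, $\phi_n$ has a plateau $\phi_n(a) = \phi_n(a+1)$ on the integers, and I would argue by a case analysis on the form of $\mathbf{s}$---how many leading $1$'s it has, how $s_n$ compares to $s_{n-1}-1$, and the behaviour of the tail---that such a plateau must land inside one of these intervals unless $\mathbf{s}$ has the shape of case~B. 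The main obstacle I anticipate is this bookkeeping: I expect to need an auxiliary arithmetic lemma locating the plateaus of $\phi_n$ (their spacing is controlled by $s_n \bmod s_{n-1}$ and $\gcd(s_n, s_{n-1})$) so that exhibiting an explicit collision $\phi_n(k_1) = \phi_n(k_2)$ with $k_1, k_2 \in \mathrm{supp}(Q_{n-1}^{(\mathbf{s})})$ reduces to a direct check in each remaining case.
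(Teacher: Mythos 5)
Your framework is correct and genuinely different from the paper's. The paper never works with the parallelepiped directly: it passes through the Pensyl--Savage encoding $Q_{n}^{(\mathbf{s})}(x)=\sum_{\mathbf{e}\in I_{n}^{(\mathbf{s})}}x^{s_{n}\mathrm{asc}(\mathbf{e})-e_{n}}$, proves Theorem \ref{ThmSavage} by a telescoping manipulation of $(1+\cdots+x^{s_n-1})\,x^{s_n\mathrm{asc}(\mathbf{e})-\lfloor s_ne_{n-1}/s_{n-1}\rfloor}$, characterizes $n$-contractibility by comparing pairs of inversion sequences (Lemma \ref{lem:LemKey}), and then exhibits explicit pairs $\mathbf{e},\overline{\mathbf{e}}$ as witnesses of failure. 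Your fibration of $\Pi_n^{(\mathbf{s})}\cap\mathbb{Z}^n$ over $\Pi_{n-1}^{(\mathbf{s})}\cap\mathbb{Z}^{n-1}$ is a valid alternative derivation of the same identity (indeed $\lceil s_nk/s_{n-1}\rceil=s_n\mathrm{asc}(\mathbf{e})-\lfloor s_ne_{n-1}/s_{n-1}\rfloor$ when $k=s_{n-1}\mathrm{asc}(\mathbf{e})-e_{n-1}$), and your reduction to injectivity of $\phi_n$ on $\mathrm{supp}(Q_{n-1}^{(\mathbf{s})})$ is equivalent to Lemma \ref{lem:LemKey} but acts on exponents directly, which is cleaner; since $\phi_n$ is nondecreasing, injectivity on the support does force the two coefficient sequences to agree in order, and non-injectivity strictly shortens the support, so the equivalence is sound.

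Two concrete problems remain. First, your plan for ``Case A'' at $n=3$ cannot succeed as written: the literal reading of the first alternative places no constraint on $s_2$ versus $s_3$, yet $\mathrm{supp}(Q_2^{(s_1,s_2)})$ contains $\{0,\ldots,s_2-1\}$, on which $\phi_3$ is injective only if $s_3\geq s_2$ or $s_3=s_2-1$ --- the paper's own example $(1,7,2)$ is exactly such a failure. The condition must be read as $s_{n-1}\leq s_n$ for all $n\geq3$ (so including $s_2\leq s_3$), after which $n=3$ is already covered by your monotone argument and there is nothing left to do in Case A. Second, your necessity argument is a sketch whose anticipated ``plateau bookkeeping'' you have not carried out; the missing structural fact that collapses it is that $\mathrm{supp}(Q_m^{(\mathbf{s})})$ is always the full initial segment $\{0,1,\ldots,\deg Q_m^{(\mathbf{s})}\}$, because $\phi_m(k+1)-\phi_m(k)\leq\lceil s_m/s_{m-1}\rceil\leq s_m$ makes consecutive translates of $\{0,\ldots,s_m-1\}$ overlap or abut. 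Granting that: if $s_n\leq s_{n-1}-2$, then $\phi_n$ maps the $s_{n-1}$ points $\{0,\ldots,s_{n-1}-1\}\subseteq\mathrm{supp}(Q_{n-1}^{(\mathbf{s})})$ into $\{0,\ldots,s_n\}$ and pigeonhole gives a collision with no case analysis on the prefix; if $s_n=s_{n-1}-1$, then $\phi_n$ is the identity on $\{0,\ldots,s_{n-1}-1\}$ and its first collision is $\phi_n(s_{n-1}-1)=\phi_n(s_{n-1})$, so non-injectivity is equivalent to $\deg Q_{n-1}^{(\mathbf{s})}\geq s_{n-1}$, which by the recursion $\deg Q_{m}^{(\mathbf{s})}=\phi_m(\deg Q_{m-1}^{(\mathbf{s})})+s_m-1$ holds precisely when some $s_j>1$ with $j\leq n-2$. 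This yields exactly the per-index criterion the paper proves (Lemma \ref{lem:Prop} and the final argument), so your route closes, but only after these two repairs.
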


We use the tools that we develop for the proof of Theorem~\ref{thm:Main}---specifically, the combinatorial description of $P_{n}(x)$ given in Section~\ref{sec:Reversible-sequences}---to address the question of the unimodality and palindromicity of the inflated $\s$-Eulerian polynomial $\Qn(x)$, which was originally considered by Pensyl and Savage~\cite{PensylSavage2013}.

We say that a sequence $(a_0,a_1, \ldots, a_n)$ is {\it unimodal} if there is an integer $0 \leq t \leq n$, such that
\[
a_0 \leq a_1 \leq a_2 \leq \ldots \leq a_t > a_{t+1} \geq a_{t+2} \geq \ldots \geq a_n.
\]
A polynomial $P(x)=\sum_{k=0}^{n}a_{k}x^{k}$ is {\it unimodal} if its coefficient sequence $(a_{0},a_{1},\ldots,a_{n})$ is unimodal. For instance, we know from Example~\ref{exa:fibonacci} that
\begin{equation}\label{eq:Fibonacci_unimodal}
Q_{5}^{(1,1,2,3,5)}(x)=\mathbf{1}x^{11}+\mathbf{1}x^{10}+\mathbf{2}x^{9}+\mathbf{4}x^{8}+\mathbf{4}x^{7}+\mathbf{4}x^{6}+\mathbf{4}x^{5}+\mathbf{4}x^{4}+\mathbf{2}x^{3}+\mathbf{2}x^{2}+\mathbf{1}x+\mathbf{1}.
\end{equation}
Since the sequence $(1,2,2,4,4,4,4,4,2,1,1)$ is unimodal, the polynomial $Q_{5}^{(1,1,2,3,5)}(x)$ is unimodal. In fact, the polynomial $\Qn(x)$ is unimodal for any sequence of positive integers $\s$. Indeed, we prove the following result, which was conjectured by Pensyl and Savage~\cite[Sec.~5]{PensylSavage2013}, in Section~\ref{sec:unimodality}.

\begin{thm}\label{ThmUnimodal}
 For any sequence ${\bf s} = (s_1, s_2, \ldots )$ of positive integers, the polynomial $Q_n^{({\bf s})}(x)$ is unimodal (i.e., the coefficient sequence of $Q_n^{({\bf s})}(x)$ is unimodal).
\end{thm}

A polynomial $P(x) = \sum_{k=0}^n a_k x^k$ is {\it palindromic} if $a_k = a_{n-k}$ for $0 \leq k \leq n$, where we assume that $a_n  > 0$ and $a_0 \neq 0$. It is clear from Equation~\eqref{eq:Fibonacci_unimodal} that $Q_{5}^{(1,1,2,3,5)}(x)$ is not palindromic. However, the polynomial $\Qn(x)$ may be palindromic. For instance, we know from Example~\ref{exa:fibonacci} that
\[
Q_{4}^{(1,1,2,3)}(x) = \mathbf{1}x^{4}+\mathbf{1}x^{3}+\mathbf{2}x^{2}+\mathbf{1}x+\mathbf{1},
\]
so $Q_{4}^{(1,1,2,3)}(x)$ is palindromic.

In Section~\ref{sec:palindromicity}, we address the question of the palindromicity of $\Qn(x)$. We provide a characterization of positive integer sequences $\s$ for which $\Qn(x)$ is palindromic, see Theorem~\ref{thm:palindromic}. In particular, this result implies that $T_{n}(x)=Q_{n}^{(1,2,\ldots,n)}(x)$ is palindromic, as mentioned by Chung and Graham~\cite{ChungGraham2013}.

Beck, Braun, K{\"o}ppe, Savage, and Zafeirakopoulos~\cite[Thm.~4.1]{GorensteinPaper} provided a different characterization of palindromic $\Qn(x)$. We give a direct proof of the equivalence of the two characterizations in Section~\ref{sec:Gorenstein_cond_equiv}.

The rest of the paper is organized as follows. In Section~\ref{sec:background}, we provide historical background about $\s$-lecture hall partitions and the polynomial $\Qn (x)$ as well as some examples. Section~\ref{sec:A-alternative-description} presents a description of $Q_{n}^{(\mathbf{s})}(x)$ in terms of certain statistics on $\mathbf{s}$-inversion sequences, due to Pensyl and Savage~\cite{PensylSavage2013}. We use this result in Section~\ref{sec:Reversible-sequences} to provide a combinatorial characterization
of $P_{n}^{(\mathbf{s})}(x)$ in terms of \mbox{$\s$-inversion} sequences. In Section~\ref{sec:The-case-of},
we study the notion of contractibility in the light of the aforementioned combinatorial descriptions of $\Qn (x)$ and $P_{n}^{(\mathbf{s})}(x)$ in terms of $\s$-inversion sequences. We introduce a lemma (Lemma~\ref{lem:LemKey}), which is the main ingredient in the proof of Theorem~\ref{thm:Main}. We then use this lemma to show that all nondecreasing sequences are contractible. In Section~\ref{sec:Proof-of-the},
we conclude the proof of Theorem~\ref{thm:Main}, completing our characterization of contractible sequences. The remaining portion of the paper is devoted to study the unimodality and palindromicity of $\Qn(x)$. Specifically, we prove Theorem~\ref{ThmUnimodal} in Section~\ref{sec:unimodality} and we provide our characterization of palindromicity of $\Qn(x)$ in Section~\ref{sec:palindromicity}. In Section~\ref{sec:Gorenstein_cond_equiv}, we describe the relationship between our characterization of palindromicity and that of Beck et al. Finally, in Section~\ref{sec:concluding_remarks}, we make some concluding remarks regarding possible extensions of our work.

\section{\label{sec:background}Background and examples: lecture hall partitions and $Q_{n}^{(\mathbf{s})}(x)$}

Lecture hall partitions were introduced in 1997 by Bousquet-M{\'e}lou
and Eriksson~\cite{BousquetI}. They defined a \emph{lecture hall
partition} into $n$ parts as a partition $\lambda=(\lambda_{1},\lambda_{2},\dots,\lambda_{n})$
satisfying the inequality
\begin{equation}
0\leq\frac{\lambda_{1}}{1}\leq\frac{\lambda_{2}}{2}\leq\cdots\leq\frac{\lambda_{n}}{n}.\label{eq:lecthallcondition}
\end{equation}
This condition ensures that $\lambda_{i}\leq\lambda_{i+1}$
for $1\leq i\leq n-1$. Furthermore, if we order the parts $\lambda_{i}$
of a lecture hall partition from left to right in a diagram (see Figure~\ref{fig:LectureHall} for an example), we obtain a figure resembling
a lecture hall. Indeed,~\eqref{eq:lecthallcondition} is a sufficient
condition to allow students ($A$, $B$, $C$ and $D$ in the picture)
in each row (part) to see the professor ($O$ in the picture). This
fact led to the name of these restricted partitions.

\begin{figure}[H]
\noindent \begin{centering}
\includegraphics[scale=0.65]{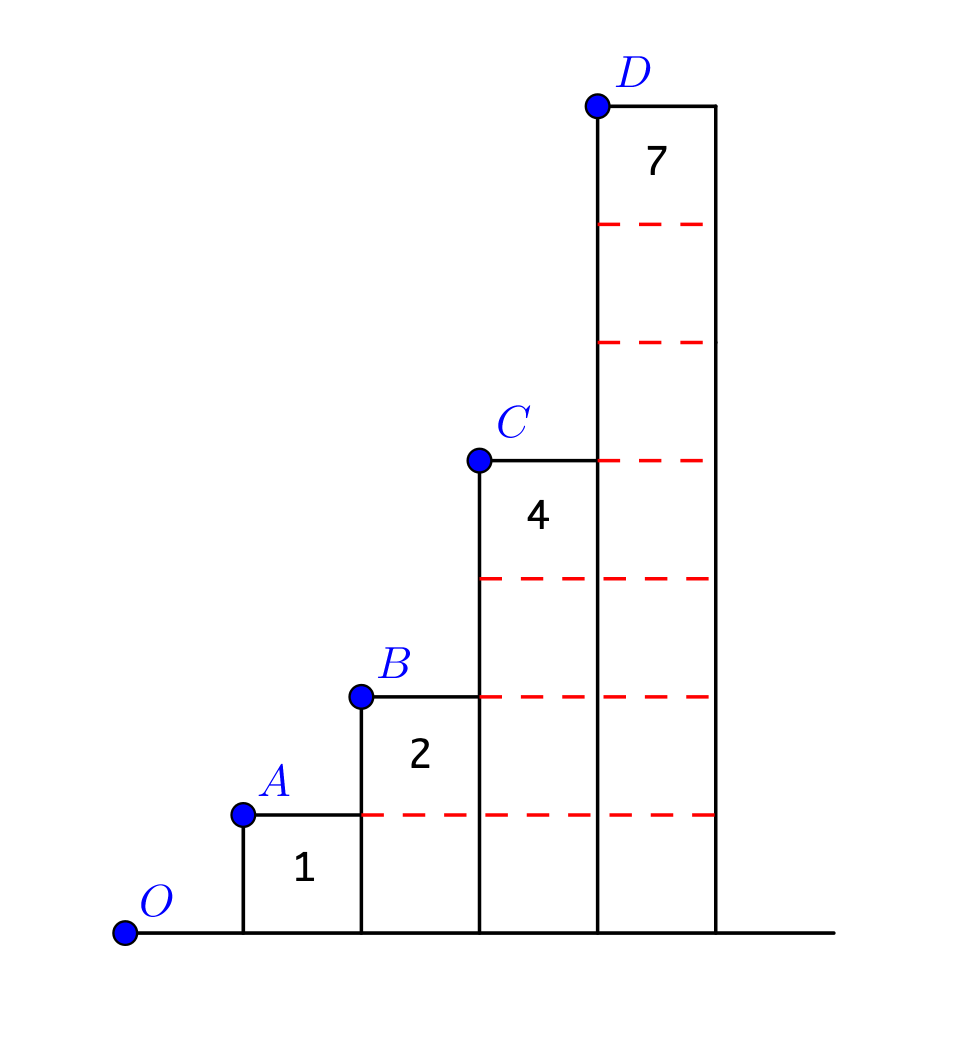}
\par\end{centering}

\protect\caption{Lecture hall partition $\lambda=(1,2,4,7)$ of $14$.\label{fig:LectureHall}}
\end{figure}

\begin{example}The partition $\lambda=(1,2,4,7)$ of $14$ is a lecture hall partition
because $\frac{1}{1}\leq\frac{2}{2}\leq\frac{4}{3}\leq\frac{7}{4}$.
A partition $\alpha=(\alpha_{1},\alpha_{2},\alpha_{3},\alpha_{4})$ such that $\alpha_{1}=1$, $\alpha_{2}=2$
and $\alpha_{3}=4$ must satisfy $\frac{4}{3}\leq\frac{\alpha_{4}}{4}$
in order to be a lecture hall partition, so necessarily, $\alpha_{4}\geq6$.
Thus, $\alpha=(1,2,4,5)$ is not a lecture hall partition. \end{example}

Lecture hall partitions quickly gained notoriety, because of the Lecture
Hall Theorem, proved by Bousquet-M{\'e}lou and Eriksson~\cite{BousquetI}.
This is probably the most remarkable result about lecture hall partitions. It relates lecture hall partitions to partitions into bounded
odd parts.

\begin{thm}[Bousquet-M{\'e}lou, Eriksson~\cite{BousquetI}]\label{thm:LectureHallThm}
For $n$ fixed,
the generating function for the number of lecture hall partitions
of $N$ into $n$ parts, $LH(N,n)$, is given by
\[
\sum_{N=0}^{\infty}LH(N,n)q^{N}=\prod_{i=1}^{n}\frac{1}{1-q^{2i-1}}.
\]
Thus, it coincides with the generating function of partitions
into odd parts, each less than $2n$. \end{thm}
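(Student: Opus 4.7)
The plan is to apply the lattice-point enumerator (\ref{eqn:lpe}) of the paper, specialized to $\mathbf{s}=(1,2,\ldots,n)$ with $x_{1}=\cdots=x_{n}=q$. Under this substitution,
\[
\sum_{N\ge 0}LH(N,n)\,q^{N}\;=\;\frac{P_{n}(q)}{\prod_{i=1}^{n}(1-q^{T_{i}})},\qquad T_{i}=i+(i+1)+\cdots+n,
\]
where $P_{n}(q)=\sum_{\lambda\in\Pi_{n}^{(1,2,\ldots,n)}\cap\mathbb{Z}^{n}}q^{|\lambda|}$. The theorem then reduces to the rational-function identity $P_{n}(q)=\prod_{i=1}^{n}(1-q^{T_{i}})/(1-q^{2i-1})$.

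To compute $P_{n}(q)$, I would parametrize the lattice points of $\Pi_{n}^{(1,2,\ldots,n)}$ explicitly. Writing $\lambda=\sum_{i=1}^{n}\alpha_{i}v_{i}$ with $\alpha_{i}\in[0,1)$ and setting $\beta_{j}=\alpha_{1}+\cdots+\alpha_{j}$, the $j$-th coordinate of $\lambda$ equals $j\beta_{j}$. Hence $\lambda\in\mathbb{Z}^{n}$ is equivalent to $\beta_{j}\in\tfrac{1}{j}\mathbb{Z}$ for every $j$, while $\alpha_{j}\in[0,1)$ becomes $\beta_{j-1}\le\beta_{j}<\beta_{j-1}+1$. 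Since $|\lambda|=\sum_{j=1}^{n}j\beta_{j}$, this rewrites $P_{n}(q)$ as a sum over chains $0=\beta_{0}\le\beta_{1}\le\cdots\le\beta_{n}$ satisfying the divisibility and gap constraints. (This parametrization identifies lattice points of $\Pi_{n}$ with the $\mathbf{s}$-inversion sequences for $\mathbf{s}=(1,2,\ldots,n)$ that are developed later in the paper.)

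Next I would prove the target identity by induction on $n$. At the inductive step, the top variable $\beta_{n}$ ranges over $\tfrac{1}{n}\mathbb{Z}\cap[\beta_{n-1},\beta_{n-1}+1)$, a set of exactly $n$ points, and summing $q^{n\beta_{n}}$ over these points gives a geometric factor. The key telescoping $T_{i}-T_{i+1}=i$ forces the factor $1-q^{T_{n}}$ in the denominator to combine with this contribution to yield exactly $1-q^{2n-1}$, while the remaining sum collapses to $P_{n-1}(q)$ after a change of indexing.

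The main obstacle is that the choice of $\beta_{n}$ depends on $\beta_{n-1}$ through the translated interval $[\beta_{n-1},\beta_{n-1}+1)$, so the sum does not factor coordinatewise; the induction must carry along the fractional position of $\beta_{n-1}$ as an auxiliary parameter. A cleaner alternative, which avoids the rational-function manipulation entirely, is to construct a direct weight-preserving bijection between lecture hall partitions of $N$ into $n$ parts and partitions of $N$ into odd parts less than $2n$, for example by iteratively extracting the largest admissible odd part $2k-1\le 2n-1$ whose subtraction leaves a valid lecture hall partition with the same number of parts.
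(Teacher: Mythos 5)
You should first be aware that the paper contains no proof of this theorem: it is quoted as a known result of Bousquet-M\'elou and Eriksson, and the reader is referred to the cited references for bijective proofs. So your attempt cannot be checked against an in-paper argument; it has to stand on its own, and as written it does not.

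Your reduction is correct: specializing (\ref{eqn:lpe}) to $\mathbf{s}=(1,2,\ldots,n)$ with all $x_{i}=q$ turns the theorem into the identity $P_{n}(q)=\prod_{i=1}^{n}(1-q^{T_{i}})/(1-q^{2i-1})$ with $T_{i}=i+(i+1)+\cdots+n$, and the parametrization $\lambda_{j}=j\beta_{j}$ with $\beta_{j}\in\tfrac{1}{j}\mathbb{Z}$ and $\beta_{j-1}\le\beta_{j}<\beta_{j-1}+1$ is also correct. But the inductive step, which is the entire content of the theorem, fails as described. Summing $q^{n\beta_{n}}$ over the $n$ admissible values of $\beta_{n}$ gives $q^{\lceil n\beta_{n-1}\rceil}(1+q+\cdots+q^{n-1})$, hence
\[
P_{n}(q)=\frac{1-q^{n}}{1-q}\sum q^{\sum_{j<n}j\beta_{j}+\lceil n\beta_{n-1}\rceil},
\]
and the remaining sum is \emph{not} $P_{n-1}(q)$ under any reindexing: already for $n=3$ it equals $1+q^{3}$, whereas $P_{2}(q)=1+q$. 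Nor can the single extracted factor $(1-q^{n})/(1-q)$ produce the required denominator $1-q^{2n-1}$. You acknowledge that the fractional position of $\beta_{n-1}$ must be carried along as an auxiliary parameter, but that refined induction (tracking extra statistics so that the recursion closes) is precisely the Bousquet-M\'elou--Eriksson proof and is the hard part; asserting that it ``must be carried along'' is not a proof. The fallback suggestion of a greedy bijection that repeatedly extracts the largest admissible odd part is likewise unsubstantiated --- it is not shown to be well defined, weight preserving, or invertible, and the known bijections are genuinely delicate. In short: a correct reduction followed by an unproved claim that is equivalent to the theorem itself.
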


Eriksen~\cite{eriksen2002simple} and Yee~\cite{Yee2001} provided bijective proofs of Theorem~\ref{thm:LectureHallThm}.

Since lecture hall partitions have distinct parts, we may think of
this result as a finite version of Euler's theorem asserting that
the number of partitions of $N$ into distinct parts coincides with
the number of partitions of $N$ into odd parts.

A natural generalization of lecture hall partitions is to consider
sequences $\la=(\la_{1},\la_{2},\ldots,\la_{n})$ of positive integers
such that
\begin{equation}
0\leq\frac{\lambda_{1}}{s_{1}}\leq\frac{\lambda_{2}}{s_{2}}\leq\dots\leq\frac{\lambda_{n}}{s_{n}}, \label{eq:DefOfsLectureHallPartitions}
\end{equation}
where $\mathbf{s}$ is a fixed sequence of positive
integers. These $\mathbf{s}$-lecture hall partitions were
introduced for nondecreasing $\s$ by Bousquet-M{\'e}lou and Eriksson~\cite{BousquetII} and for arbitrary positive
integer sequences by Savage and others~\cite{CorteelLeeSavage,CorteelSavage,CorteelSavageSills,SavageSchuster2012}.

\begin{example}Let $\mathbf{s}=(3,2)$. Then there are 6 $\mathbf{s}$-lecture
hall partitions in the fundamental parallelepiped $\Pi_{2}^{(\mathbf{s})}$.
Indeed, $\Pi_{2}^{(\mathbf{s})}\cap\mathbb{Z}^{2}=\left\{ (0,0),(0,1),(1,1),(1,2),(2,2),(2,3)\right\} $, see Figure~\ref{fig:Comparison}. In general, for a sequence $\mathbf{s}$ of positive integers, we have $\left|\Pi_{n}^{(\mathbf{s})}\cap\mathbb{Z}^{n}\right|=s_{1}s_{2}\cdots s_{n}$.
\end{example}

The name $\s$-lecture hall partitions is misleading because they need not be partitions in the traditional sense. For instance, $(1,2)$ and $(2,1)$ are points
of the $(5,2)$-lecture hall cone and, by definition, they are distinct
$(5,2)$-lecture hall partitions of 3, see Figure~\ref{fig:NonpartitionsExample}.
However, $(1,2)$ and $(2,1)$ are not distinct partitions. Nevertheless, if $\mathbf{s}$ is nondecreasing, then the $\s$-lecture hall partitions are in fact partitions. Indeed, if $\s$ is nondecreasing, then the inequalities in~\eqref{eq:DefOfsLectureHallPartitions} guarantee that  $\la$ is nondecreasing, so a nontrivial permutation of the entries of $\lambda$ cannot yield another $\s$-lecture hall partition. It is for this reason that $\s$-lecture hall partitions are of particular relevance when $\s$ in nondecreasing.

\begin{figure}[H]
\noindent \begin{centering}
\includegraphics[scale=0.8]{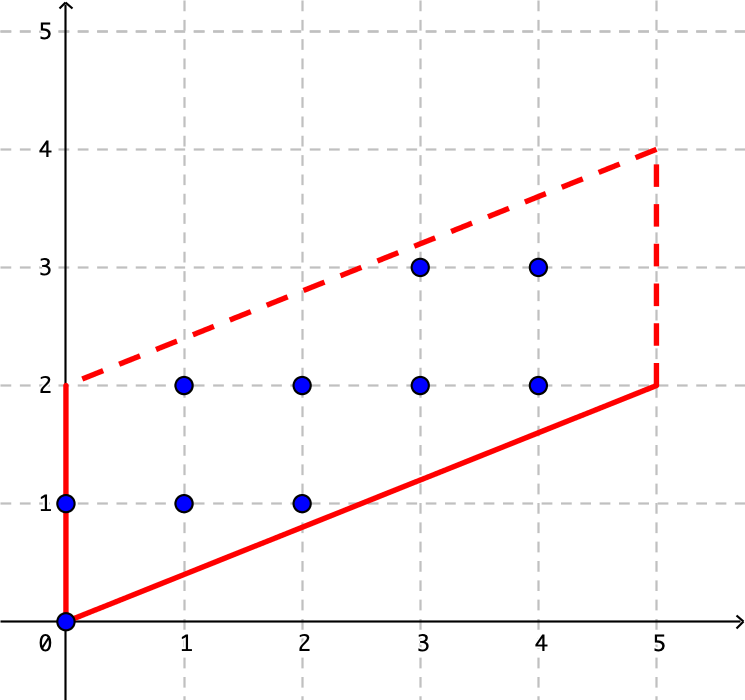}
\par\end{centering}
\protect\caption{Fundamental parallelepiped $\Pi_{2}^{(5,2)}$ of $\Co_{2}^{(5,2)}$.\label{fig:NonpartitionsExample}}
\end{figure}

The next example shows how to compute the inflated $\s$-Eulerian polynomial of a sequence $\s$ from the fundamental parallelepiped $\Pi_{n}^{(\s)}$ associated to the generating set $V_{n}(\s)$ of $\Co_{n}^{(\s)}$.

\begin{example}The sequence $\mathbf{s}=(5,3)$ has $\mathbf{s}$-lecture
hall cone $\Co_{2}^{(\mathbf{s})}=\left\{ (\lambda_{1},\lambda_{2})\in\mathbb{R}^{2}\mid0\leq\frac{\lambda_{1}}{5}\leq\frac{\lambda_{2}}{3}\right\} $.
This cone is generated by $\mathbf{v}_{1}=[5,3]$ and $\mathbf{v}_{2}=[0,3]$. There are $s_{1}s_{2}=15$
$\mathbf{s}$-lecture hall partitions in the fundamental
parallelepiped associated to these generators, see Figure~\ref{fig:Example}. Namely, the set $\Pi_{2}^{(\mathbf{s})}\cap\mathbb{Z}^{2}$
is described by
\[
\left\{ (0,0),(0,1),(1,1),(0,2),(1,2),(2,2),(3,2),(1,3),(2,3),(3,3),(4,3),(2,4),(3,4),(4,4),(4,5)\right\} .
\]
Therefore, we have the inflated $\mathbf{s}$-Eulerian polynomial
\[
Q_{2}^{(\mathbf{s})}(x)=\sum_{(\lambda_{1},\lambda_{2})\in\Pi_{2}^{(\mathbf{s})}\cap\mathbb{Z}^{2}}x^{\lambda_{2}} = x^{5}+3x^{4}+4x^{3}+4x^{2}+2x+1.
\]
\end{example}

\noindent \begin{center}
\begin{figure}[H]
\noindent \begin{centering}
\includegraphics[scale=0.85]{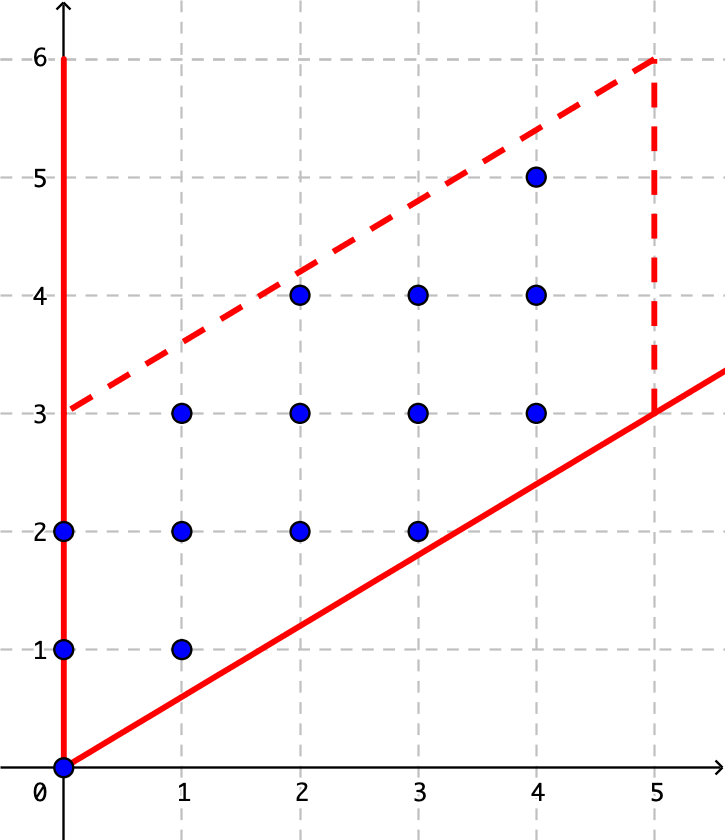}
\par\end{centering}

\protect\caption{The cone $\Co_{2}^{(5,3)}$ and its fundamental parallelepiped $\Pi_{2}^{(5,3)}$.\label{fig:Example}}
\end{figure}
\par\end{center}

If we replace the vector $\bv_{n}=[0,0,\ldots,s_{n}]$ by $\bv'_{n}=[0,0,\ldots,1]$ in $V_{n}(\s)$, we obtain the set $V'_{n}(\s)=\{\bv_{1},\ldots,\bv_{n-1},\bv'_{n}\}$, which also generates $\Co_{n}^{(\s)}$. However, the fundamental parallelepiped
\[
\Pi'_{n}(\s)=\left\{ \alpha_{n}\bv'_{n}+\sum_{i=1}^{n-1}\alpha_{i}\bv_{i}\mid0\leq\alpha_{i}<1\right\},
\]
associated to $V'_{n}(\s)$, does not coincide with $\Pi_{n}^{(\s)}$ if $s_{n}\neq1$, see Figure~\ref{fig:Comparison} for an example. Henceforth, whenever we refer to the  fundamental parallelepiped of $\Co_{n}^{(\s)}$, we mean $\Pi_{n}^{(\s)}$, unless otherwise stated.

In Section~\ref{sec:Reversible-sequences}, we will prove that $P_{n-1}^{(\mathbf{s})}(x)$ is in fact a polynomial by providing a combinatorial interpretation for it in terms of inversion sequences. However, it is also possible to prove the polynomiality of $P_{n-1}^{(\mathbf{s})}(x)$ via lattice point enumeration. Indeed, reinterpreting Equation~\eqref{eqn:lpe} in terms of $V'_{n}(\s)$ and $\Pi'_{n}(\s)$ we deduce that
\[
\sum_{\la\in \Co_{n}^{(\mathbf{s})}\cap\mathbb{Z}^{n}}x^\la=\frac{\sum_{\la\in\Pi'_{n}{(\mathbf{s})}\cap\mathbb{Z}^{n}}x^{\la}}{\left(1-x^{\bv'_{n}}\right)\prod_{i=1}^{n-1}(1-x^{\bv_{i}})},
\]
so setting $x_{1}=\cdots=x_{n-1}=1$ and $x_{n}=x$, we may write
\[
\sum_{\la\in \Co_{n}^{(\mathbf{s})}\cap\mathbb{Z}^{n}}x^{\la_{n}}=\frac{\sum_{\la\in\Pi'_{n}{(\mathbf{s})}\cap\mathbb{Z}^{n}}x^{\la_{n}}}{\left(1-x\right)(1-x^{s_{n}})^{n-1}}.
\]
It follows from Equation~(\ref{eqnx:lpe}) that
\[
\frac{\Qn (x)}{(1-x^{s_{n}})^{n}}=\frac{\sum_{\la\in\Pi'_{n}{(\mathbf{s})}\cap\mathbb{Z}^{n}}x^{\la_{n}}}{\left(1-x\right)(1-x^{s_{n}})^{n-1}}.
\]
Therefore,
\begin{equation}
P_{n-1}^{(\s)}(x)=\frac{\Qn (x)}{\ax}=\sum_{\la\in\Pi'_{n}{(\mathbf{s})}\cap\mathbb{Z}^{n}}x^{\la_{n}}. \label{eq:GeomRealization}
\end{equation}
The lattice point enumeration argument above is due to Savage~\cite[p.~30]{Savage2016}.

Equation~\eqref{eq:GeomRealization} not only proves that $P_{n-1}^{(\s)}(x)$ is a polynomial, it shows that this polynomial has a geometric realization. Indeed, just like $\Qn (x)$ enumerates the $\s$-lecture hall partitions in the fundamental parallelepiped $\Pi_{n}^{(\s)}$, by height, $P_{n-1}^{(\s)}(x)$ enumerates the $\s$-lecture hall partitions in the fundamental parallelepiped $\Pi'_{n}(\s)$, also by height. The following example illustrates the difference between the geometric interpretations of the polynomials $\Qn (x)$ and $P_{n-1}^{(\s)}(x)$.

\noindent \begin{center}
\begin{figure}[H]
\noindent \begin{centering}
\includegraphics[scale=0.86]{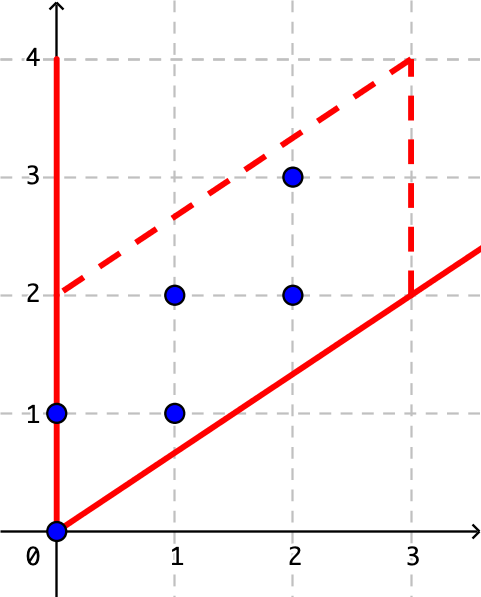}
\qquad{}\qquad{}\includegraphics[scale=0.86]{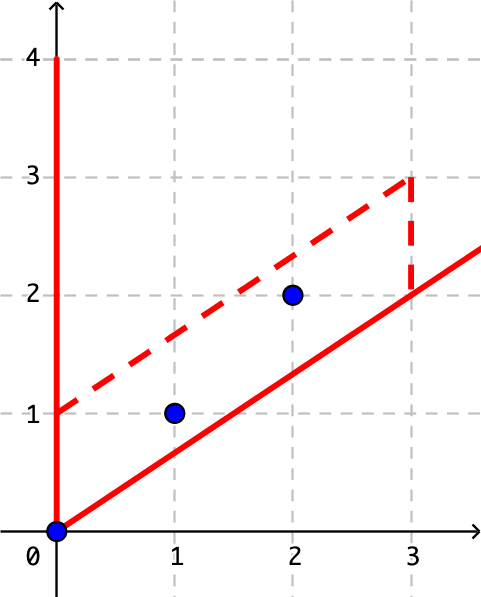}
\par\end{centering}

\protect\caption{Comparison between $\Pi_{2}^{(3,2)}$ and $\Pi'_{2}(3,2)$.\label{fig:Comparison}}
\end{figure}

\par\end{center}

\begin{example}Let $\mathbf{s}=(3,2)$. This sequence has $\mathbf{s}$-lecture hall cone
\[
\Co_{2}^{(\s)}=\left\{ (\lambda_{1},\lambda_{2})\in\mathbb{R}^{2}\mid0\leq\frac{\lambda_{1}}{3}\leq\frac{\lambda_{2}}{2}\right\}.
\]
It follows from the definition of the inflated $\s$-Eulerian polynomial and Figure~\ref{fig:Comparison} that
\[
 Q_{2}^{(\mathbf{s})}(x)=x^{3}+2x^{2}+2x+1.
\]
Thus, direct computation yields $P_{1}^{(\s)}(x)=Q_{2}^{(\s)}(x)/[s_{2}]_{x}=x^{2}+x+1$. On the other hand, we deduce from Figure~\ref{fig:Comparison} that $\Pi'_{2}(\s)\cap\Z^{2}=\left\{(0,0),(1,1),(2,2)\right\}$. Using Equation~\eqref{eq:GeomRealization}, we conclude that
\[
P_{1}^{(\s)}(x)=\sum_{\left(\la_{1},\la_{2}\right)\Pi'_{2}(\s)\cap\Z^{2}}x^{\la_{2}}=x^{2}+x+1,
\]
which, of course, coincides with our direct computation of $P_{1}^{(\s)}(x)$ from $Q_{2}^{(\s)}(x)$.
\end{example}

\section{\label{sec:A-alternative-description}Inversion sequences and $Q_{n}^{(\mathbf{s})}(x)$}

The definition of the inflated $\mathbf{s}$-Eulerian polynomial in Equation~\eqref{eq:FirstDefQ} is
very intuitive. However, we will need a different description of $Q_{n}^{(\mathbf{s})}(x)$ to characterize contractible sequences. In this section we introduce this alternative description, which is due to Pensyl and Savage~\cite{PensylSavage2013}.

Given a finite sequence of positive integers $\mathbf{s}=(s_{1},s_{2},\dots,s_{n})$, we define the $\mathbf{s}$-\emph{inversion
sequences} as
\[
I_{n}^{(\mathbf{s})}=\left\{ \e=(e_{1},e_{2},\dots,e_{n})\in\mathbb{Z}^{n}\mid0\leq e_{i}<s_{i}\textrm{ for }1\leq i\leq n\right\} .
\]
If $\e\in I_{n}^{(\mathbf{s})}$, we say $1\leq i<n$ is an \emph{ascent}
of $\e$ if $\frac{e_{i}}{s_{i}}<\frac{e_{i+1}}{s_{i+1}}$. We say
$0$ is an ascent of $\e$ if $e_{1}>0$. It is customary to denote the set of ascents of $\e\in I_{n}^{(\mathbf{s})}$
by $\Asc\mathbf{e}$ and its cardinality by $\asc\mathbf{e}$.
We denote the collection $\left\{ \Asc\mathbf{e}\mid\e\in I_{n}^{(\mathbf{s})}\right\} $
by $\Asc_{n}^{(\mathbf{s})}$.

\begin{example}Consider the sequence $\s=(1,1,2,2)$. By definition,
the $\s$-inversion sequences are given by $I_{4}^{(\s)}=\left\{ (0,0,0,0),(0,0,0,1),(0,0,1,0),(0,0,1,1)\right\} $.
Note that $3\notin\Asc(0,0,1,1)$ because $\frac{1}{2}\nless\frac{1}{2}$.
However, the inequality $\frac{0}{1}<\frac{1}{2}$ implies that $2\in\Asc(0,0,1,1)$.
\end{example}

We now state Pensyl and Savage's result. It leads to a description
of $Q_{n}^{(\mathbf{s})}(x)$ in terms of the ascents of the $\mathbf{s}$-inversion
sequences $I_{n}^{(\mathbf{s})}$.

\begin{thm}[Pensyl, Savage~\cite{PensylSavage2013}]\label{ThmPensylSavage}
Let $\mathbf{s}$ be a sequence of positive integers. Then
\[
\sum_{\la\in \Co_{n}^{(\mathbf{s})}\cap\mathbb{Z}^{n}}x^{\la_{n}}=\frac{\sum_{\e\in I_{n}^{(\mathbf{s})}}x^{\asc\mathbf{e}-e_{n}}}{(1-x^{s_{n}})^{n}}.
\]
\end{thm}

Combining Theorem~\ref{ThmPensylSavage} and Equation~(\ref{eqnx:lpe}) we
obtain the following corollary.

\begin{cor}[Pensyl, Savage~\cite{PensylSavage2013}]\label{cor:PensylSavage} Let $\s$ be a sequence of
positive integers, then
\[
Q_{n}^{(\mathbf{s})}(x)=\sum_{\mathbf{e}\in I_{n}^{(\mathbf{s})}}x^{s_{n}\asc\mathbf{e}-e_{n}}.
\]
\end{cor}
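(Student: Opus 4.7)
My plan is to recognize the corollary as a one-line consequence of two ingredients already in hand: the rational expression (\ref{eqnx:lpe}) for the univariate lattice-point generating function of $C_{n}^{(\mathbf{s})}$, and Pensyl and Savage's identity (Theorem \ref{ThmPensylSavage}). Concretely, I would write both as fractions with the common denominator $(1-x^{s_{n}})^{n}$ and then equate the numerators.

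Using the definition $Q_{n}^{(\mathbf{s})}(x) = \sum_{\la \in \Pi_{n}^{(\mathbf{s})} \cap \mathbb{Z}^{n}} x^{\la_{n}}$, equation (\ref{eqnx:lpe}) rewrites as
\[
\sum_{\la \in C_{n}^{(\mathbf{s})} \cap \mathbb{Z}^{n}} x^{\la_{n}} \;=\; \frac{Q_{n}^{(\mathbf{s})}(x)}{(1-x^{s_{n}})^{n}},
\]
while Theorem \ref{ThmPensylSavage} supplies the parallel expression
\[
\sum_{\la \in C_{n}^{(\mathbf{s})} \cap \mathbb{Z}^{n}} x^{\la_{n}} \;=\; \frac{\sum_{\e \in I_{n}^{(\mathbf{s})}} x^{s_{n}\asc(\e) - e_{n}}}{(1-x^{s_{n}})^{n}}.
\]
Setting the two right-hand sides equal and cancelling the common denominator $(1-x^{s_{n}})^{n}$ would immediately yield the claimed formula $Q_{n}^{(\mathbf{s})}(x) = \sum_{\e \in I_{n}^{(\mathbf{s})}} x^{s_{n}\asc(\e) - e_{n}}$.

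There is essentially no obstacle to address; the derivation is a single cancellation. The only point I would verify explicitly is that the right-hand side is a bona fide polynomial, so that the cancellation is legitimate in $\mathbb{Z}[x]$. For this, I would check that every exponent $s_{n}\asc(\e) - e_{n}$ is nonnegative: when $\e = \mathbf{0}$ the exponent equals $0$, and for any nonzero $\e$, letting $i_{0} = \min\{i : e_{i} > 0\}$ produces an ascent of $\e$ (either $0 \in \Asc\,\e$ when $i_{0} = 1$, or $i_{0}-1 \in \Asc\,\e$ otherwise, since $e_{i_{0}-1}/s_{i_{0}-1} = 0 < e_{i_{0}}/s_{i_{0}}$), so $\asc(\e) \geq 1$; combined with $e_{n} \leq s_{n}-1$ this gives $s_{n}\asc(\e) - e_{n} \geq 1$. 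With this remark in place, the corollary follows at once from Theorem \ref{ThmPensylSavage} and (\ref{eqnx:lpe}).
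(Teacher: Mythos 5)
Your proposal is correct and is essentially the paper's own argument: the paper derives the corollary precisely by combining Theorem \ref{ThmPensylSavage} with equation (\ref{eqnx:lpe}) and cancelling the common denominator $(1-x^{s_{n}})^{n}$. Your added check that every exponent $s_{n}\asc(\e)-e_{n}$ is nonnegative is a harmless (and welcome) extra verification that the paper leaves implicit.
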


This result is analogous to the combinatorial interpretation of the $n$th $\s$-Eulerian polynomial given by Savage and Schuster~\cite{SavageSchuster2012}.

In Section~\ref{sec:Reversible-sequences}, we will use Corollary~\ref{cor:PensylSavage} to describe $P_{n}^{(\s)}(x)$ in terms of statistics of $\s$-inversion sequences.

\begin{example}Consider the sequence $\mathbf{s}=(1,2,3)$. The
$\mathbf{s}$-inversion sequences
\[
I_{3}^{(\s)}=\left\{ (0,0,0),(0,0,1),(0,0,2),(0,1,0),(0,1,1),(0,1,2)\right\}
\]
have ascents $\Asc_{3}^{(\s)}=\left\{ \emptyset,\{2\},\{2\},\{1\},\{1\},\{1,2\}\right\} $,
respectively. By Corollary~\ref{cor:PensylSavage},
\begin{align*}
Q_{3}^{(\s)}(x) & =x^{3(0)-0}+x^{3(1)-1}+x^{3(1)-2}+x^{3(1)-0}+x^{3(1)-1}+x^{3(2)-2}\\
 & =x^{4}+x^{3}+2x^{2}+x+1.
\end{align*}
\end{example}

\section{\label{sec:Reversible-sequences}Contractible sequences}

In this section, we engage the notion of contractibility for a
sequence $\mathbf{s}$ of positive integers. We begin by proving a very convenient combinatorial characterization of $P_{n-1}^{(\mathbf{s})}(x)$. This characterization is analogous to that of $\Qn (x)$ provided by Theorem~\ref{ThmPensylSavage}.

Given a sequence $\mathbf{e}=(e_{1},e_{2},\ldots,e_{n-1})\in I_{n-1}$
and $0\leq k<s_{n}$, we denote $(e_{1,}e_{2},\ldots,e_{n-1},k)\in I_{n}$
by $(\mathbf{e},k)$.

\begin{thm}\label{ThmSavage} Let $\mathbf{s}$ be a sequence of
positive integers, then
\[
P_{n-1}^{(\mathbf{s})}(x)=\sum_{\mathbf{e}\in I_{n-1}^{(\mathbf{s})}}x^{s_{n}\asc\mathbf{e}-\left\lfloor \frac{s_{n}e_{n-1}}{s_{n-1}}\right\rfloor }.
\]
\end{thm}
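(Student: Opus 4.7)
The plan is to apply Corollary \ref{cor:PensylSavage} and decompose the sum defining $Q_n^{(\mathbf{s})}(x)$ according to the last coordinate. Every $\mathbf{e}\in I_n^{(\mathbf{s})}$ factors uniquely as $(\mathbf{e}',e_n)$ with $\mathbf{e}'\in I_{n-1}^{(\mathbf{s})}$ and $0\le e_n\le s_n-1$, so I would write
\[
Q_n^{(\mathbf{s})}(x)=\sum_{\mathbf{e}'\in I_{n-1}^{(\mathbf{s})}}\sum_{e_n=0}^{s_n-1}x^{s_n\asc(\mathbf{e}',e_n)-e_n}.
\]
The key observation is that the ascent sets of $\mathbf{e}'$ and of $(\mathbf{e}',e_n)$ agree on positions $0,1,\ldots,n-2$, so $\asc(\mathbf{e}',e_n)=\asc(\mathbf{e}')+\chi[n{-}1\in\Asc(\mathbf{e}',e_n)]$. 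The added ascent at position $n-1$ occurs precisely when $e_{n-1}/s_{n-1}<e_n/s_n$, i.e.\ $e_n>s_ne_{n-1}/s_{n-1}$.

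The next step is to translate that threshold into the floor appearing in the statement. Setting $k=\lfloor s_ne_{n-1}/s_{n-1}\rfloor$, I would check the two cases: if $s_ne_{n-1}/s_{n-1}$ is the integer $k$, then $e_n>k$ iff $e_n\ge k+1$; if $s_ne_{n-1}/s_{n-1}$ is non-integer, then $k<s_ne_{n-1}/s_{n-1}<k+1$ again forces $e_n>s_ne_{n-1}/s_{n-1}$ iff $e_n\ge k+1$. Either way the cutoff is $e_n\ge k+1$.

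With $a=\asc(\mathbf{e}')$ fixed, the inner sum then splits as
\[
\sum_{e_n=0}^{k}x^{s_na-e_n}+\sum_{e_n=k+1}^{s_n-1}x^{s_n(a+1)-e_n}
=x^{s_na-k}(1+x+\cdots+x^{k})+x^{s_na+1}(1+x+\cdots+x^{s_n-2-k}),
\]
and a direct reindexing shows this equals $x^{s_na-k}(1+x+\cdots+x^{s_n-1})$. Summing over $\mathbf{e}'\in I_{n-1}^{(\mathbf{s})}$, dividing by $1+x+\cdots+x^{s_n-1}$, and substituting back $k=\lfloor s_ne_{n-1}/s_{n-1}\rfloor$ and $a=\asc(\mathbf{e}')$ yields the stated identity.

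The main obstacle is not any single step but the care required in the floor analysis: one must rule out the possibility that $e_n=k$ already produces an ascent when $s_ne_{n-1}/s_{n-1}$ is a proper integer, since an off-by-one there would throw off the telescoping of the two partial geometric sums. Once that bookkeeping is handled, the identity collapses to an elementary algebraic computation, and polynomiality of $Q_n^{(\mathbf{s})}(x)/(1+x+\cdots+x^{s_n-1})$ with nonnegative integer coefficients follows for free.
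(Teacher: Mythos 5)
Your proposal is correct and follows essentially the same route as the paper: both reduce to Corollary \ref{cor:PensylSavage}, identify the threshold $e_n\geq\left\lfloor \frac{s_{n}e_{n-1}}{s_{n-1}}\right\rfloor +1$ at which appending $e_n$ creates an ascent at position $n-1$ (the paper's $a$ is your $k+1$), and split the resulting geometric sum into the two ranges so that the terms telescope into $x^{s_{n}\mathrm{asc}(\mathbf{e})-\left\lfloor s_{n}e_{n-1}/s_{n-1}\right\rfloor }\left(1+x+\cdots+x^{s_{n}-1}\right)$. The only difference is direction of reading: the paper multiplies the claimed quotient by $1+x+\cdots+x^{s_{n}-1}$ and expands, while you decompose $Q_{n}^{(\mathbf{s})}(x)$ over the last coordinate and collapse, which is the same computation.
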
 \begin{proof} Let $\mathbf{s}$ be a sequence of positive
integers and $\mathbf{e}\in I_{n-1}^{(\mathbf{s})}$. Let $m$ be
the smallest positive integer such that $\frac{e_{n-1}}{s_{n-1}}<\frac{m}{s_{n}}$.
Then $m-1\leq\frac{s_{n}e_{n-1}}{s_{n-1}}<m$ and consequently, $\left\lfloor \frac{s_{n}e_{n-1}}{s_{n-1}}\right\rfloor =m-1$.
Note that $\frac{e_{n-1}}{s_{n-1}}<1$ implies that $m\leq s_{n}$.
Write
\begin{align}
\left(\sum_{k=0}^{s_{n}-1}x^{k}\right)x^{s_{n}\asc\mathbf{e}-\left\lfloor \frac{s_{n}e_{n-1}}{s_{n-1}}\right\rfloor } & =\left(\sum_{k=0}^{s_{n}-1}x^{k}\right)x^{s_{n}\asc\mathbf{e}-m+1}\nonumber \\
 & =\sum_{k=0}^{m-1}x^{s_{n}\asc\mathbf{e}-(m-k-1)}+\sum_{k=m}^{s_{n}-1}x^{s_{n}\asc\mathbf{e}-(m-k-1)}.\label{eq:mid}
\end{align}
If $m=s_{n}$, the rightmost sum is empty. Since $\frac{e_{n-1}}{s_{n-1}}\geq\frac{k}{s_{n}}$
for $0\leq k\leq m-1$, we know that
\begin{align*}
\sum_{k=0}^{m-1}x^{s_{n}\asc\mathbf{e}-(m-k-1)} & =x^{s_{n}\asc\mathbf{e}-0}+x^{s_{n}\asc\mathbf{e}-1}+\cdots+x^{s_{n}\asc\mathbf{e}-(m-1)}\\
 & =x^{s_{n}\asc(\mathbf{e},0)-0}+x^{s_{n}\asc(\mathbf{e},1)-1}+\cdots+x^{s_{n}\asc(\mathbf{e},m-1)-(m-1)}.
\end{align*}

Similarly, the fact that $\frac{e_{n-1}}{s_{n-1}}<\frac{k}{s_{n}}$
for $m\leq k\leq s_{n}-1$ implies that
\begin{align*}
\sum_{k=m}^{s_{n}-1}x^{s_{n}\asc\mathbf{e}-(m-k-1)} & =x^{s_{n}\asc\mathbf{e}+1}+x^{s_{n}\asc\mathbf{e}+2}+\cdots+x^{s_{n}\asc\mathbf{e}+s_{n}-m}\\
 & =x^{s_{n}(1+\asc\mathbf{e})-(s_{n}-1)}+x^{s_{n}(1+\asc\mathbf{e})-(s_{n}-2)}+\cdots+x^{s_{n}(1+\asc\mathbf{e})-m}\\
 & =x^{s_{n}\asc(\mathbf{e},s_{n}-1)-(s_{n}-1)}+x^{s_{n}\asc(\mathbf{e},s_{n}-2)-(s_{n}-2)}+\cdots+x^{s_{n}\asc(\mathbf{e},m)-m}.
\end{align*}

By Equation~\eqref{eq:mid}, we deduce that for each $\mathbf{e}\in I_{n-1}^{(\mathbf{s})}$,
\[
\left(\sum_{k=0}^{s_{n}-1}x^{k}\right)x^{s_{n}\asc\mathbf{e}-\left\lfloor \frac{s_{n}e_{n-1}}{s_{n-1}}\right\rfloor }=\sum_{k=0}^{s_{n}-1}x^{s_{n}\asc(\mathbf{e},k)-k},
\]
so we may write
\[
\left(\sum_{k=0}^{s_{n}-1}x^{k}\right)\sum_{\mathbf{e}\in I_{n-1}^{(\mathbf{s})}}x^{s_{n}\asc\mathbf{e}-\left\lfloor \frac{s_{n}e_{n-1}}{s_{n-1}}\right\rfloor }=\sum_{\mathbf{e}\in I_{n}^{(\mathbf{s})}}x^{s_{n}\asc\mathbf{e}-e_{n}}=Q_{n}^{(\mathbf{s})}(x).
\]
The rightmost equality follows from Corollary~\ref{cor:PensylSavage}.
\end{proof}

In particular, Theorem~\ref{ThmSavage} provides an alternative proof of the polynomiality of $P_{n-1}^{(\s)}(x)$. Of course, we already knew this from Equation~\eqref{eq:GeomRealization}. However, Theorem~\ref{ThmSavage} is novel in that it gives a description of $P_{n-1}^{(\s)}(x)$ in terms of $\s$-inversion sequences, just like Theorem~\ref{ThmPensylSavage} does for $\Qn (x)$. These combinatorial characterizations of $\Qn (x)$ and $P_{n-1}^{(\s)}(x)$ will be exploited in Section~\ref{sec:The-case-of} to prove that nondecreasing sequences are contractible and then again in Section~\ref{sec:Proof-of-the} to prove Theorem~\ref{thm:Main}.

The following corollary is an immediate consequence of Theorem~\ref{ThmSavage}.

\begin{cor}[Chung, Graham~\cite{ChungGraham2013}]\label{CorChungGraham}
\[
P_{n-1}^{(1,2,\dots,n)}(x)=\frac{Q_{n}^{(1,2,\dots,n)}(x)}{\anx}=\sum_{\mathbf{e}\in I_{n-1}}x^{n\cdot\asc\mathbf{e}-e_{n-1}}.
\]
\end{cor}

In fact, Corollary~\ref{CorChungGraham} is equivalent to Proposition~\ref{prop:ChungGraham}. Indeed, to see this consider the mapping $\phi:S_{n}\rightarrow I_{n}$
defined by $\phi(\pi)=(e_{1},\ldots,e_{n})$, where $e_{i}=|\{j>0\ |\ j<i\ {\rm and}\ \pi_{j}>\pi_{i}\}|$. Then $\Des\pi=\Asc\phi(\pi)$ and $e_{n}=n-\pi_{n}$. This shows that $Q_{n}^{(1,2,\ldots,n)}(x)=T_{n}(x)$ and $P_{n-1}^{(1,2,\dots,n)}(x)=T_{n}(x)/\anx$, so the equivalence between Proposition~\ref{prop:ChungGraham} and Corollary~\ref{CorChungGraham} follows.

We mentioned in Section~\ref{sec:Introduction} that the coefficient sequence of $T_{n-1}(x)$ coincides with that of the polynomial $T_{n}(x)/\anx$. Hence, $\s=(1,2,3,\ldots)$ is a contractible sequence. Corollaries~\ref{cor:PensylSavage} and~\ref{CorChungGraham} confirm this fact. Computational trials show that it is common among positive sequences $\mathbf{s}$ that the coefficient sequences of $\Qnn(x)$
and $P_{n-1}^{(\s)}(x)$ coincide, at least for small $n$. This
fact motivates the definition of $n$\emph{-contractible} sequences. If $n\geq3$, we say a positive sequence $\mathbf{s}$
is $n$-contractible if the coefficient sequences
of $Q_{n-1}^{(\mathbf{s})}(x)$ and $P_{n-1}^{(\s)}(x)$ coincide. Therefore, a contractible sequence is one that is $n$-contractible for $n\geq3$.

\begin{example}Let $\mathbf{s}$ be the Fibonacci sequence. Using
Corollaries~\ref{cor:PensylSavage} and~\ref{CorChungGraham}, it is possible to compute $Q_{n-1}^{(\mathbf{s})}(x)$
and $P_{n-1}^{(\s)}(x)$ for the first few $n$, see Example~\ref{exa:fibonacci}, and verify that the
Fibonacci sequence is $n$-contractible for $n=3,4,5,6$.\end{example}

The following corollary shows that all positive constant sequences
are contractible. Furthermore, the polynomials $Q_{n-1}^{(\mathbf{s})}(x)$
and $P_{n-1}^{(\s)}(x)$ coincide if $\mathbf{s}$ is constant.

\begin{cor}\label{CorConstantCase} Let $\mathbf{s}$ be a positive
constant sequence. Then for all $n\geq2$,
\[
P_{n-1}^{(\s)}(x)=\frac{Q_{n}^{(\mathbf{s})}(x)}{\ax}=Q_{n-1}^{(\mathbf{s})}(x).
\]
\end{cor}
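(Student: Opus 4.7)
The plan is to prove this corollary by directly comparing the two combinatorial expressions supplied by Theorem \ref{ThmSavage} and Corollary \ref{cor:PensylSavage}, exploiting the fact that for a constant sequence the floor term in Theorem \ref{ThmSavage} collapses.

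First, I would invoke Theorem \ref{ThmSavage} to write
\[
\frac{Q_n^{(\mathbf{s})}(x)}{1+x+\cdots+x^{s_n-1}} = \sum_{\mathbf{e}\in I_{n-1}^{(\mathbf{s})}} x^{s_n\,\mathrm{asc}(\mathbf{e}) - \lfloor s_n e_{n-1}/s_{n-1}\rfloor}.
\]
Next, I would specialize to the constant case $s_1 = s_2 = \cdots = s$, so that $s_n = s_{n-1} = s$. In this case $s_n e_{n-1}/s_{n-1} = e_{n-1}$, which is already an integer, so the floor disappears and the exponent simplifies to $s \cdot \mathrm{asc}(\mathbf{e}) - e_{n-1}$.

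Then I would apply Corollary \ref{cor:PensylSavage} at index $n-1$ to express
\[
Q_{n-1}^{(\mathbf{s})}(x) = \sum_{\mathbf{e}\in I_{n-1}^{(\mathbf{s})}} x^{s_{n-1}\,\mathrm{asc}(\mathbf{e}) - e_{n-1}} = \sum_{\mathbf{e}\in I_{n-1}^{(\mathbf{s})}} x^{s\,\mathrm{asc}(\mathbf{e}) - e_{n-1}},
\]
which is term-by-term identical to the previous expression, completing the proof.

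There is essentially no obstacle here: the entire content of the corollary is the observation that the floor correction $\lfloor s_n e_{n-1}/s_{n-1}\rfloor$ introduced by Theorem \ref{ThmSavage} reduces to $e_{n-1}$ precisely when $s_n = s_{n-1}$, making the combinatorial description of $Q_n^{(\mathbf{s})}(x)/(1+x+\cdots+x^{s_n-1})$ coincide exactly with that of $Q_{n-1}^{(\mathbf{s})}(x)$. The only thing worth double-checking is that the set $I_{n-1}^{(\mathbf{s})}$ appearing in Theorem \ref{ThmSavage} (which uses $s_1,\dots,s_{n-1}$) is the same as the set of inversion sequences used by Corollary \ref{cor:PensylSavage} for $Q_{n-1}^{(\mathbf{s})}$; this is immediate since both depend only on the first $n-1$ entries of $\mathbf{s}$.
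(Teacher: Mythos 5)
Your proposal is correct and is essentially identical to the paper's own proof: both apply Theorem \ref{ThmSavage}, observe that $\lfloor s_n e_{n-1}/s_{n-1}\rfloor = e_{n-1}$ when the sequence is constant, and then recognize the resulting sum as $Q_{n-1}^{(\mathbf{s})}(x)$ via Corollary \ref{cor:PensylSavage}. No issues.
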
 \begin{proof} Corollary~\ref{cor:PensylSavage} and Theorem~\ref{ThmSavage} imply that
\[
P_{n-1}^{(\s)}(x)=\frac{Q_{n}^{(\mathbf{s})}(x)}{\ax}=\sum_{\mathbf{e}\in I_{n-1}^{(\mathbf{s})}}x^{s_{n}\asc\mathbf{e}-\left\lfloor \frac{s_{n}e_{n-1}}{s_{n-1}}\right\rfloor }=\sum_{\mathbf{e}\in I_{n-1}^{(\mathbf{s})}}x^{s_{n-1}\asc\mathbf{e}-e_{n-1}}=Q_{n-1}^{(\mathbf{s})}(x). \qedhere
\]
\end{proof}

\section{\label{sec:The-case-of}The case of nondecreasing sequences}

In this section, we prove that all nondecreasing sequences $\s$ are contractible. Our work relies on the combinatorial characterizations of $\Qn (x)$ and $P_{n}^{(\s)}(x)$ introduced in Sections~\ref{sec:A-alternative-description}~and~\ref{sec:Reversible-sequences}, respectively. As we mentioned before, the case of nondecreasing $\s$ is of particular interest because
for such $\mathbf{s}$ the $\s$-lecture hall partitions are partitions in the traditional sense.

We begin by showing that if $\e$ and $\bar{\e}$ are $\s$-inversion sequences in $\Inn$, then the relative order of their corresponding exponents in $\Qnn (x)$ is the same as that of their corresponding exponents in $P_{n-1}^{(\s)}(x)$. By Corollary~\ref{cor:PensylSavage} and Theorem~\ref{ThmSavage}, this is amounts to proving the following lemma.

\begin{lem}\label{lem:1} Let $\mathbf{s}$ be a sequence of positive
integers and $n\geq3$. Suppose that $\mathbf{e},\bar{\mathbf{e}}\in I_{n-1}^{(\mathbf{s})}$ are such that
\[
s_{n-1}\asc\mathbf{e}-e_{n-1}\leq s_{n-1}\asc\bar{\mathbf{e}}-\bar{e}_{n-1}.
\]
Then
\begin{equation}\label{eq:double_ref}
s_{n}\asc\mathbf{e}-\left\lfloor \frac{s_{n}e_{n-1}}{s_{n-1}}\right\rfloor \leq s_{n}\asc\bar{\mathbf{e}}-\left\lfloor \frac{s_{n}\bar{e}_{n-1}}{s_{n-1}}\right\rfloor .
\end{equation}
\end{lem}
\begin{proof}
Let $\mathbf{e},\bar{\mathbf{e}}\in I_{n-1}^{(\mathbf{s})}$ be such that $s_{n-1}\asc\mathbf{e}-e_{n-1}\leq s_{n-1}\asc\bar{\mathbf{e}}-\bar{e}_{n-1}$. First, we prove that $\asc\e\leq\asc \bar{\e}$. We proceed by contradiction. If $\asc\e>\asc\bar{\e}$, then $\asc\e-\asc\bar{\e}\geq 1$. Hence,
\[
s_{n-1}\left(\asc\e-\asc\bar{\e}\right)\geq s_{n-1}>e_{n-1}\geq e_{n-1}-\bar{e}_{n-1}.
\]
It follows that $s_{n-1}\asc\e-e_{n-1}>s_{n-1}\asc\bar{\e}-\bar{e}_{n-1}$, which is a contradiction. Thus, $\asc\e\leq\asc\bar{\e}$.

If $\asc\e=\asc\bar{\e}$, then $s_{n-1}\asc\mathbf{e}-e_{n-1}\leq s_{n-1}\asc\bar{\mathbf{e}}-\bar{e}_{n-1}$ implies that $e_{n-1}\geq\bar{e}_{n-1}$. This means that $\left\lfloor\frac{s_{n}e_{n-1}}{s_{n-1}}\right\rfloor\geq\left\lfloor\frac{s_{n}\bar{e}_{n-1}}{s_{n-1}}\right\rfloor$, so~\eqref{eq:double_ref} holds.

Suppose that $\asc\e<\asc\bar{\e}$. Then $1\leq\asc\bar{\e}-\asc\e$. This implies that $s_{n}\leq s_{n}\left(\asc\bar{\e}-\asc\e\right)$. Given that $\bar{e}_{n-1}<s_{n-1}$, we know that $\frac{s_{n}\bar{e}_{n-1}}{s_{n-1}}<s_{n}$ and consequently, $\left\lfloor\frac{s_{n}\bar{e}_{n-1}}{s_{n-1}}\right\rfloor<s_{n}$. We deduce that
\[
\left\lfloor\frac{s_{n}\bar{e}_{n-1}}{s_{n-1}}\right\rfloor-\left\lfloor\frac{s_{n}e_{n-1}}{s_{n-1}}\right\rfloor\leq\left\lfloor\frac{s_{n}\bar{e}_{n-1}}{s_{n-1}}\right\rfloor<s_{n}\leq s_{n}\left(\asc\bar{\e}-\asc\e\right).
\]
Hence, in this case we also conclude that~\eqref{eq:double_ref} holds.
\end{proof}

The next corollary is an immediate consequence of Lemma~\ref{lem:1}.

\begin{cor}\label{cor:FixingCharacterizationContractible}
Let $\mathbf{s}$ be a sequence of positive
integers and $n\geq3$. Let $\mathbf{e},\bar{\mathbf{e}}\in\Inn$
be such that
\[
s_{n-1}\asc\mathbf{e}-e_{n-1}=s_{n-1}\asc\bar{\mathbf{e}}-\bar{e}_{n-1}.
\]
Then
\[
s_{n}\asc\mathbf{e}-\left\lfloor \frac{s_{n}e_{n-1}}{s_{n-1}}\right\rfloor = s_{n}\asc\bar{\mathbf{e}}-\left\lfloor \frac{s_{n}\bar{e}_{n-1}}{s_{n-1}}\right\rfloor .
\]
\end{cor}

Corollary~\ref{cor:FixingCharacterizationContractible} shows that if $\e$ and $\bar{\e}$ are $\s$-inversion sequences in $\Inn$ whose exponents in $\Qnn (x)$ coincide, then their exponents in $P_{n-1}^{(\s)}(x)$ also coincide.

We now use Lemma~\ref{lem:1} and Corollary~\ref{cor:FixingCharacterizationContractible} to provide a characterization of $n$-contractible sequences. Namely, we show that a sequence $\s$ is $n$-contractible if and only if every pair of $\s$-inversion sequences $\e,\bar{\e}\in\Inn$ satisfy the converse of Corollary~\ref{cor:FixingCharacterizationContractible}.

\begin{lem}\label{lem:LemKey}Let $\mathbf{s}$ be a sequence of
positive integers and $n\geq3$. Then $\mathbf{s}$ is $n$-contractible
if and only if whenever $\mathbf{e},\bar{\mathbf{e}}\in I_{n-1}^{(\mathbf{s})}$
are such that
\[
s_{n}\asc\mathbf{e}-\left\lfloor \frac{s_{n}e_{n-1}}{s_{n-1}}\right\rfloor =s_{n}\asc\bar{\mathbf{e}}-\left\lfloor \frac{s_{n}\bar{e}_{n-1}}{s_{n-1}}\right\rfloor ,
\]
then
\[
s_{n-1}\asc\mathbf{e}-e_{n-1}=s_{n-1}\asc\bar{\mathbf{e}}-\bar{e}_{n-1}.
\]
\end{lem}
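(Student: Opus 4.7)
The plan is to obtain Lemma \ref{lem:LemKey} as a direct combination of Lemma \ref{lem:1} with the characterization of $n$-contractibility recorded in (\ref{eq:strategy}).

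First, I would recall from Corollary \ref{cor:PensylSavage} and Theorem \ref{ThmSavage} that both $Q_{n-1}^{(\mathbf{s})}(x)$ and $Q_{n}^{(\mathbf{s})}(x)/(1+x+\cdots+x^{s_n-1})$ admit expansions as sums over the common index set $I_{n-1}^{(\mathbf{s})}$, governed respectively by the exponent functions $E_1(\mathbf{e})=s_{n-1}\asc(\mathbf{e})-e_{n-1}$ and $E_2(\mathbf{e})=s_n\asc(\mathbf{e})-\lfloor s_n e_{n-1}/s_{n-1}\rfloor$. As spelled out in (\ref{eq:strategy}), the sequence $\mathbf{s}$ is $n$-contractible precisely when these two exponent functions induce the same partition of $I_{n-1}^{(\mathbf{s})}$ into fibers, i.e., when the biconditional $E_1(\mathbf{e})=E_1(\overline{\mathbf{e}}) \Longleftrightarrow E_2(\mathbf{e})=E_2(\overline{\mathbf{e}})$ holds for every pair $\mathbf{e},\overline{\mathbf{e}}\in I_{n-1}^{(\mathbf{s})}$.

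Next, I would invoke Lemma \ref{lem:1}, which unconditionally establishes the forward direction of that biconditional for any positive-integer sequence $\mathbf{s}$: equality of $E_1$-values always forces equality of $E_2$-values. Consequently, the content of $n$-contractibility collapses onto the reverse implication, namely that $E_2(\mathbf{e})=E_2(\overline{\mathbf{e}})$ forces $E_1(\mathbf{e})=E_1(\overline{\mathbf{e}})$. This is exactly the condition isolated in the statement of Lemma \ref{lem:LemKey}, so the proof amounts to assembling these two ingredients in a single short paragraph.

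There is essentially no obstacle here, since the lemma is a logical repackaging of prior material. The only point worth emphasizing in the written proof is that Lemma \ref{lem:1} applies with \emph{no} restriction on $\mathbf{s}$, so the ``only if'' direction of the biconditional in (\ref{eq:strategy}) is automatic and the ``if'' direction is the genuine content of $n$-contractibility that Lemma \ref{lem:LemKey} packages. From this reformulation, Section \ref{sec:Proof-of-the} will then be free to verify the hypothesis of Lemma \ref{lem:LemKey} for the sequences $\mathbf{s}$ described in Theorem \ref{thm:Main}.
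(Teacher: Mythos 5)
Your proposal is correct and is exactly the paper's route: the paper derives Lemma \ref{lem:LemKey} immediately from the characterization (\ref{eq:strategy}) together with Lemma \ref{lem:1}, which supplies the forward implication unconditionally, leaving only the reverse implication as the content of $n$-contractibility. No further comment is needed.
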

\begin{proof}
Let $\s$ be a sequence of positive integers and $n\geq3$. Let $\e,\bar{\e}\in\Inn$ be such that
\[
s_{n}\asc\mathbf{e}-\left\lfloor \frac{s_{n}e_{n-1}}{s_{n-1}}\right\rfloor =s_{n}\asc\bar{\mathbf{e}}-\left\lfloor \frac{s_{n}\bar{e}_{n-1}}{s_{n-1}}\right\rfloor.
\]
and
\[
s_{n-1}\asc\mathbf{e}-e_{n-1}\neq s_{n-1}\asc\bar{\mathbf{e}}-\bar{e}_{n-1}.
\]
Say $\Qnn (x)$ has $k$ nonzero coefficients. If $\hat{\e}\in\Inn$ satisfies
\[
s_{n-1}\asc\mathbf{e}-e_{n-1}=s_{n-1}\asc\hat{\mathbf{e}}-\hat{e}_{n-1},
\]
then Corollary~\ref{cor:FixingCharacterizationContractible} implies that $s_{n}\asc\mathbf{e}-\left\lfloor \frac{s_{n}e_{n-1}}{s_{n-1}}\right\rfloor =s_{n}\asc\hat{\mathbf{e}}-\left\lfloor \frac{s_{n}\hat{e}_{n-1}}{s_{n-1}}\right\rfloor$. Similarly, if
\[
s_{n-1}\asc\hat{\e}-\hat{e}_{n-1}=s_{n-1}\asc\bar{\mathbf{e}}-\bar{e}_{n-1},
\]
then $s_{n}\asc\hat{\mathbf{e}}-\left\lfloor \frac{s_{n}\hat{e}_{n-1}}{s_{n-1}}\right\rfloor =s_{n}\asc\bar{\mathbf{e}}-\left\lfloor \frac{s_{n}\bar{e}_{n-1}}{s_{n-1}}\right\rfloor$. Using Corollary~\ref{cor:FixingCharacterizationContractible}, we deduce that $P_{n-1}^{(\s)}(x)$ has at most $k-1$ coefficients. In particular, this implies that the coefficient sequences of $\Qnn (x)$ and $P_{n-1}^{(\s)}(x)$ do not coincide, so $\s$ is not contractible.

Conversely, suppose that whenever $\e,\bar{\e}\in\Inn$ are such that
\[
s_{n}\asc\mathbf{e}-\left\lfloor \frac{s_{n}e_{n-1}}{s_{n-1}}\right\rfloor =s_{n}\asc\bar{\mathbf{e}}-\left\lfloor \frac{s_{n}\bar{e}_{n-1}}{s_{n-1}}\right\rfloor.
\]
Then $s_{n-1}\asc\mathbf{e}-e_{n-1}=s_{n-1}\asc\bar{\mathbf{e}}-\bar{e}_{n-1}$. By Corollary~\ref{cor:FixingCharacterizationContractible}, we deduce that if $\e,\bar{\e}\in\Inn$, then
\begin{equation}
s_{n-1}\asc\mathbf{e}-e_{n-1}=s_{n-1}\asc\bar{\mathbf{e}}-\bar{e}_{n-1}\quad\textrm{if and only if}\quad s_{n}\asc\mathbf{e}-\left\lfloor \frac{s_{n}e_{n-1}}{s_{n-1}}\right\rfloor =s_{n}\asc\bar{\mathbf{e}}-\left\lfloor \frac{s_{n}\bar{e}_{n-1}}{s_{n-1}}\right\rfloor.\label{eq:FailedCharacterization}
\end{equation}

Corollary~\ref{cor:PensylSavage} and Theorem~\ref{ThmSavage} imply that
\[
Q_{n-1}^{(\mathbf{s})}(x)=\sum_{e\in I_{n-1}^{(\mathbf{s})}}x^{s_{n-1}\asc e-e_{n-1}}\quad\textrm{and}\quad P_{n-1}^{(\s)}(x)=\sum_{e\in I_{n-1}^{(\mathbf{s})}}x^{s_{n}\asc e-\left\lfloor \frac{s_{n}e_{n-1}}{s_{n-1}}\right\rfloor }.
\]
Therefore,~\eqref{eq:FailedCharacterization} implies that the set of coefficients of $\Qnn (x)$ coincides with that of $P_{n-1}^{(\s)}(x)$. Furthermore, it follows from Lemma~\ref{lem:1} that the coefficient sequences of $\Qnn (x)$ and $P_{n-1}^{(\s)}(x)$ coincide. That is, $\s$ is $n$-contractible.
\end{proof}

We now show that $s_{n}\geq s_{n-1}$ implies that $\mathbf{s}$ is
$n$-contractible. This will imply the contractibility of nondecreasing sequences.

\begin{prop} \label{prop:1}Let $\mathbf{s}$ be a sequence of positive
integers and $n\geq3$. If $s_{n}\geq s_{n-1}$, then $\mathbf{s}$
is $n$-contractible. Thus, if $\s$ is such that $\left(s_{n}\right)_{n=3}^{\infty}$ is nondecreasing, then $\s$ is contractible. In particular, nondecreasing sequences are contractible.\end{prop}

\begin{proof} Let $\mathbf{s}$ be a sequence of positive
	integers and $n\geq3$. Suppose $s_{n}\geq s_{n-1}$. Let $\mathbf{e},\bar{\mathbf{e}}\in I_{n-1}^{(\mathbf{s})}$
be such that
\begin{equation}
s_{n}\asc\mathbf{e}-\left\lfloor \frac{s_{n}e_{n-1}}{s_{n-1}}\right\rfloor =s_{n}\asc\bar{\mathbf{e}}-\left\lfloor \frac{s_{n}\bar{e}_{n-1}}{s_{n-1}}\right\rfloor .\label{eq:FA}
\end{equation}
Since $0\leq e_{n-s},\bar{e}_{n-1}<s_{n-1}$, we know that the
inequalities $0\leq\frac{s_{n}e_{n-1}}{s_{n-1}},\frac{s_{n}\bar{e}_{n-1}}{s_{n-1}}<s_{n}$
hold, so
\[
0\leq\left\lfloor \frac{s_{n}e_{n-1}}{s_{n-1}}\right\rfloor ,\left\lfloor \frac{s_{n}\bar{e}_{n-1}}{s_{n-1}}\right\rfloor \leq s_{n}-1.
\]
Equation~\eqref{eq:FA} implies that $\left\lfloor \frac{s_{n}e_{n-1}}{s_{n-1}}\right\rfloor $
and $\left\lfloor \frac{s_{n}\bar{e}_{n-1}}{s_{n-1}}\right\rfloor $
are congruent modulo $s_{n}$, so we deduce that
\[
\left\lfloor \frac{s_{n}e_{n-1}}{s_{n-1}}\right\rfloor =\left\lfloor \frac{s_{n}\bar{e}_{n-1}}{s_{n-1}}\right\rfloor ,
\]
and therefore, $\asc\mathbf{e}=\asc\bar{\mathbf{e}}$.
Assume that $s_{n-1}\asc\mathbf{e}-e_{n-1}\neq s_{n-1}\asc\bar{\mathbf{e}}-\bar{e}_{n-1}$. Then it must be that $e_{n-1}\neq\bar{e}_{n-1}$. Say $e_{n-1}>\bar{e}_{n-1}$,
then
\[
\left\lfloor \frac{s_{n}\bar{e}_{n-1}}{s_{n-1}}\right\rfloor \leq\left\lfloor \frac{s_{n}(\bar{e}_{n-1}+1)}{s_{n-1}}\right\rfloor \leq\cdots\leq\left\lfloor \frac{s_{n}e_{n-1}}{s_{n-1}}\right\rfloor =\left\lfloor \frac{s_{n}\bar{e}_{n-1}}{s_{n-1}}\right\rfloor .
\]
Thus, $\left\lfloor \frac{s_{n}\bar{e}_{n-1}}{s_{n-1}}\right\rfloor =\left\lfloor \frac{s_{n}\bar{e}_{n-1}}{s_{n-1}}+\frac{s_{n}}{s_{n-1}}\right\rfloor \geq\left\lfloor \frac{s_{n}\bar{e}_{n-1}}{s_{n-1}}+1\right\rfloor =\left\lfloor \frac{s_{n}\bar{e}_{n-1}}{s_{n-1}}\right\rfloor +1$,
which is a contradiction. We conclude that $s_{n-1}\asc\mathbf{e}-e_{n-1}=s_{n-1}\asc\bar{\mathbf{e}}-\bar{e}_{n-1}$.
The result then follows from Lemma~\ref{lem:LemKey}.
\end{proof}

In the preceding proof the condition $s_{n}\geq s_{n-1}$
is not used to conclude that
\[
\left\lfloor \frac{s_{n}e_{n-1}}{s_{n-1}}\right\rfloor =\left\lfloor \frac{s_{n}\bar{e}_{n-1}}{s_{n-1}}\right\rfloor .
\]
Therefore, for any $\mathbf{s}$, if $\mathbf{e},\bar{\mathbf{e}}\in I_{n-1}^{(\mathbf{s})}$
are such that
\[
s_{n}\asc\mathbf{e}-\left\lfloor \frac{s_{n}e_{n-1}}{s_{n-1}}\right\rfloor =s_{n}\asc\bar{\mathbf{e}}-\left\lfloor \frac{s_{n}\bar{e}_{n-1}}{s_{n-1}}\right\rfloor ,
\]
then $\left\lfloor \frac{s_{n}e_{n-1}}{s_{n-1}}\right\rfloor =\left\lfloor \frac{s_{n}\bar{e}_{n-1}}{s_{n-1}}\right\rfloor $
and $\asc\mathbf{e}=\asc\bar{\mathbf{e}}$.

\section{\label{sec:Proof-of-the}A characterization of contractible sequences}

Although the criterion for contractibility provided by Proposition~\ref{prop:1} requires relatively weak conditions on $\mathbf{s}$,
there do exist noncontractible sequences. Indeed, the next example
exhibits an infinite family of noncontractible sequences.

\begin{example}Consider the finite sequence $(1,7,2)$. Using Corollary~\ref{cor:PensylSavage} and Theorem~\ref{ThmSavage}, we may compute:
\begin{align*}
Q_{2}^{(1,7)}(x) & =\mathbf{1}x^{6}+\mathbf{1}x^{5}+\mathbf{1}x^{4}+\mathbf{1}x^{3}+\mathbf{1}x^{2}+\mathbf{1}x+\mathbf{1},\textrm{ and}\\
P_{2}^{(1,7,2)}(x) & =\mathbf{3}x^{2}+\mathbf{3}x+\mathbf{1}.
\end{align*}
Therefore, any $\mathbf{s}$ such that $(s_{1},s_{2},s_{3})=(1,7,2)$
is not contractible.\end{example}

By Proposition~\ref{prop:1}, we know that if $\mathbf{s}$
is a sequence of positive integers such that $(s_{i})_{i=3}^{\infty}$
is nondecreasing, then $\mathbf{s}$ is contractible. The next example
shows that this sufficient criterion is not necessary.

\begin{example}Let $\mathbf{s}$ be the sequence defined by
\[
s_{n}=\begin{cases}
1 & \textrm{if }n\in\{1,2\},\\
3 & \textrm{if }n=3\textrm{ and},\\
2 & \textrm{if }n\geq4.
\end{cases}
\]
Using Corollary~\ref{cor:PensylSavage}, we find that
\begin{align*}
Q_{3}^{(\mathbf{s})}(x) & =\mathbf{1}x^{2}+\mathbf{1}x+\mathbf{1},\textrm{ and}\\
Q_{2}^{(\mathbf{s})}(x) & =\mathbf{1}.
\end{align*}
On the other hand, Theorem~\ref{ThmSavage} implies that
\begin{align*}
P_{3}^{(\mathbf{s})}(x) & =\mathbf{1}x^{2}+\mathbf{1}x+\mathbf{1},\textrm{ and}\\
P_{2}^{(\mathbf{s})}(x) & =\mathbf{1}.
\end{align*}
Suppose that $n>4$, then $s_{n}\geq s_{n-1}$ and it follows from
Proposition~\ref{prop:1} that $\mathbf{s}$ is $n$-contractible.
We see that although $s_{3}>s_{4}$, the sequence $\mathbf{s}$ is
contractible. \end{example}

We now conclude the proof of Theorem~\ref{thm:Main}. Our strategy is to exploit the characterization of \mbox{$n$-contractible} sequences provided by Lemma~\ref{lem:LemKey}. In order to do this, we need the following lemma.

\begin{lem}\label{lem:Prop}Let $\mathbf{s}$ be a sequence of positive
integers. Suppose that $n\geq3$ is such that $s_{n}\leq s_{n-1}-1$. If
there exists $1\leq j\leq n-2$ such that $s_{j}>1$, then $\mathbf{s}$
is not $n$-contractible.\end{lem}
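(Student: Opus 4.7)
The plan is to use Lemma~\ref{lem:LemKey} together with the remark following Proposition~\ref{prop:1} to reformulate $n$-contractibility into a form that is easy to violate by an explicit construction. By that remark, whenever $\mathbf{e},\overline{\mathbf{e}}\in I_{n-1}^{(\mathbf{s})}$ satisfy
\[
s_{n}\textnormal{asc}(\mathbf{e})-\left\lfloor \tfrac{s_{n}e_{n-1}}{s_{n-1}}\right\rfloor =s_{n}\textnormal{asc}(\overline{\mathbf{e}})-\left\lfloor \tfrac{s_{n}\overline{e}_{n-1}}{s_{n-1}}\right\rfloor,
\]
one automatically has $\textnormal{asc}(\mathbf{e})=\textnormal{asc}(\overline{\mathbf{e}})$ and $\lfloor s_{n}e_{n-1}/s_{n-1}\rfloor=\lfloor s_{n}\overline{e}_{n-1}/s_{n-1}\rfloor$. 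Thus to prove that $\mathbf{s}$ is not $n$-contractible it suffices, by Lemma~\ref{lem:LemKey}, to exhibit a pair $\mathbf{e},\overline{\mathbf{e}}\in I_{n-1}^{(\mathbf{s})}$ such that (a) $\textnormal{asc}(\mathbf{e})=\textnormal{asc}(\overline{\mathbf{e}})$, (b) $\lfloor s_{n}e_{n-1}/s_{n-1}\rfloor=\lfloor s_{n}\overline{e}_{n-1}/s_{n-1}\rfloor$, and (c) $e_{n-1}\neq\overline{e}_{n-1}$.

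The construction I propose is the following. Fix an index $j$ with $1\leq j\leq n-2$ and $s_{j}\geq 2$, which exists by hypothesis. Note also that $s_{n-1}\geq s_{n}+1\geq 2$. Define
\[
\mathbf{e}=(0,\ldots,0,\underbrace{1}_{\text{position }j},0,\ldots,0,0),\qquad \overline{\mathbf{e}}=(0,0,\ldots,0,1),
\]
both in $I_{n-1}^{(\mathbf{s})}$; here $\overline{e}_{n-1}=1$ and all other entries of $\overline{\mathbf{e}}$ are zero, while $\mathbf{e}$ has its unique nonzero entry equal to $1$ in position $j$. By construction $e_{n-1}=0\neq 1=\overline{e}_{n-1}$, giving (c). For (b), observe $\lfloor s_{n}\cdot 0/s_{n-1}\rfloor=0$ and, since $s_{n}<s_{n-1}$, also $\lfloor s_{n}/s_{n-1}\rfloor=0$.

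It remains to verify (a), namely that both sequences have exactly one ascent. For $\overline{\mathbf{e}}$, all ratios $\overline{e}_{i}/s_{i}$ vanish except the last one, so the only ascent is at position $n-2$ where $0<1/s_{n-1}$. For $\mathbf{e}$, the only nonzero ratio occurs at position $j$; if $j=1$ then position $0$ is an ascent (because $e_{1}=1>0$) and no other position is, while if $j\geq 2$ then position $j-1$ is the unique ascent (since $0<1/s_{j}$, and at position $j$ we have $1/s_{j}>0$, which is not an ascent). In both cases $\textnormal{asc}(\mathbf{e})=1=\textnormal{asc}(\overline{\mathbf{e}})$. Then $s_{n-1}\textnormal{asc}(\mathbf{e})-e_{n-1}=s_{n-1}$ while $s_{n-1}\textnormal{asc}(\overline{\mathbf{e}})-\overline{e}_{n-1}=s_{n-1}-1$, so Lemma~\ref{lem:LemKey} shows $\mathbf{s}$ is not $n$-contractible.

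I do not foresee any real obstacle: the argument is essentially a one-shot construction, and the only thing to be careful about is the ascent bookkeeping, particularly the boundary cases $j=1$, $j=n-2$, and small $n$ (such as $n=3$, where $j=n-2=1$ forces $\mathbf{e}=(1,0)$ and $\overline{\mathbf{e}}=(0,1)$). Each of these is handled uniformly by the case analysis above, so the proof should be short once the reformulation via Lemma~\ref{lem:LemKey} is in place.
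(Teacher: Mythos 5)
Your proposal is correct and uses essentially the same argument as the paper: the identical witness pair (a single $1$ in position $j$ versus a single $1$ in position $n-1$, with roles of $\mathbf{e}$ and $\overline{\mathbf{e}}$ merely swapped), both with one ascent and equal floors but different last entries, contradicting the characterization in Lemma~\ref{lem:LemKey}. Your extra ascent bookkeeping and the appeal to the remark after Proposition~\ref{prop:1} are harmless but not needed for this direction.
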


\begin{proof} Since $s_{n}\leq s_{n-1}-1$, we know that $s_{n-1}\geq 2$. Thus, $\bar{\mathbf{e}}=(0,\ldots,0,1,0,\ldots0)$
($1$ in the $j$th entry) and $\mathbf{e}=(0,0,\ldots,0,1)$ are elements of $I_{n-1}^{(\mathbf{s})}$
such that $\asc\bar{\mathbf{e}}=1=\asc\mathbf{e}$
and $\bar{e}_{n-1}=0\neq1=e_{n-1}$. Now, $s_{n}\leq s_{n-1}-1$ implies that $\left\lfloor \frac{s_{n}e_{n-1}}{s_{n-1}}\right\rfloor =0=\left\lfloor \frac{s_{n}\bar{e}_{n-1}}{s_{n-1}}\right\rfloor $ and so,
\[
s_{n-1}\asc\mathbf{e}-e_{n-1}\neq s_{n-1}\asc\bar{\mathbf{e}}-\bar{e}_{n-1}\quad\textrm{and}\quad s_{n}\asc\mathbf{e}-\left\lfloor \frac{s_{n}e_{n-1}}{s_{n-1}}\right\rfloor =s_{n}\asc\bar{\mathbf{e}}-\left\lfloor \frac{s_{n}\bar{e}_{n-1}}{s_{n-1}}\right\rfloor .
\]
The result then follows from Lemma~\ref{lem:LemKey}.
\end{proof}

By Lemma~\ref{lem:Prop}, it only remains to consider sequences of
the form $(1,1,\ldots,1,s_{N-1},s_{N},\ldots)$, where $N\geq3$,
$s_{N}\leq s_{N-1}-1$ and $(s_{i})_{i=N}^{\infty}$ is nondecreasing.
Indeed, in order to conclude the proof of Theorem~\ref{thm:Main},
it suffices to show that if $n\geq3$ and $\mathbf{s}$ is a sequence
such that $s_{1}=s_{2}=\cdots=s_{n-2}=1$ and $s_{n}\leq s_{n-1}-1$,
then $\mathbf{s}$ is $n$-contractible if and only if $s_{n}=s_{n-1}-1$.
\begin{proof}[Proof of Theorem~\ref{thm:Main}]Let $n\geq3$ and suppose
$\mathbf{s}$ is a sequence such that $s_{1}=s_{2}=\cdots=s_{n-2}=1$
and $s_{n}\leq s_{n-1}-1$.

First, we prove that if $s_{n}=s_{n-1}-1$, then $\s$ is contractible. Assume $s_{n}=s_{n-1}-1$. Let $\mathbf{e},\bar{\mathbf{e}}\in I_{n-1}^{(\mathbf{s})}$
be such that $s_{n}\asc\mathbf{e}-\left\lfloor \frac{s_{n}e_{n-1}}{s_{n-1}}\right\rfloor =s_{n}\asc\bar{\mathbf{e}}-\left\lfloor \frac{s_{n}\bar{e}_{n-1}}{s_{n-1}}\right\rfloor $.
Then, by the remark following Proposition~\ref{prop:1}, we know that $\left\lfloor \frac{s_{n}e_{n-1}}{s_{n-1}}\right\rfloor =\left\lfloor \frac{s_{n}\bar{e}_{n-1}}{s_{n-1}}\right\rfloor $
and $\asc\mathbf{e}=\asc\bar{\mathbf{e}}$.
Write
\[
\left\lfloor \frac{s_{n}e_{n-1}}{s_{n-1}}\right\rfloor =\left\lfloor \frac{(s_{n-1}-1)e_{n-1}}{s_{n-1}}\right\rfloor =e_{n-1}+\left\lfloor \frac{-e_{n-1}}{s_{n-1}}\right\rfloor .
\]
This means that
\[
\left\lfloor \frac{s_{n}e_{n-1}}{s_{n-1}}\right\rfloor =\begin{cases}
0 & \textrm{if }e_{n-1}=0,\\
e_{n-1}-1 & \textrm{otherwise. }
\end{cases}
\]

Hence, if $e_{n-1}\neq\bar{e}_{n-1}$, then (without loss of
generality) $e_{n-1}=0$ and $\bar{e}_{n-1}=1$. This means that
$\mathbf{e}=(0,0,\ldots,0)$ and $\bar{\mathbf{e}}=(0,\ldots,0,1)$,
but then $\asc\mathbf{e}=0\neq1=\asc\bar{\mathbf{e}}$, which is a contradiction. We deduce that $e_{n-1}=\bar{e}_{n-1}$ and
consequently,
\[
s_{n-1}\asc\mathbf{e}-e_{n-1}=s_{n-1}\asc\bar{\mathbf{e}}-\bar{e}_{n-1}.
\]
Using Lemma~\ref{lem:LemKey}, we conclude that $\mathbf{s}$ is $n$-contractible
if $s_{n}=s_{n-1}-1$.

We now show that if $s_{n}<s_{n-1}-1$, then $\s$ is not contractible. Suppose $s_{n}<s_{n-1}-1$. The idea is to construct $\e,\bar{\e}\in\Inn$ such that
\[
s_{n-1}\asc\mathbf{e}-e_{n-1}\neq s_{n-1}\asc\bar{\mathbf{e}}-\bar{e}_{n-1}\quad\textrm{and}\quad s_{n}\asc\mathbf{e}-\left\lfloor \frac{s_{n}e_{n-1}}{s_{n-1}}\right\rfloor =s_{n}\asc\bar{\mathbf{e}}-\left\lfloor \frac{s_{n}\bar{e}_{n-1}}{s_{n-1}}\right\rfloor .
\]

Say $s_{n}=s_{n-1}-l$ for $2\leq l<s_{n-1}$. Note that $\left\lfloor \frac{-l(s_{n-1}-1)}{s_{n-1}}\right\rfloor =-l+\left\lfloor \frac{l}{s_{n-1}}\right\rfloor =-l\leq-2$.
Thus, we may choose $k$ to be the smallest integer in $\{2,3,\ldots,s_{n-1}-1\}$
such that $\left\lfloor \frac{-lk}{s_{n-1}}\right\rfloor \leq-2$.
Since $k-1\geq1$, it must be that $\left\lfloor \frac{-l(k-1)}{s_{n-1}}\right\rfloor =-1$. If $0\leq m<s_{n-1}$, then
\begin{equation}
\left\lfloor \frac{s_{n}m}{s_{n-1}}\right\rfloor =\left\lfloor \frac{(s_{n-1}-l)m}{s_{n-1}}\right\rfloor =m+\left\lfloor \frac{-lm}{s_{n-1}}\right\rfloor .\label{eq:1}
\end{equation}
Hence,
\begin{equation}
\left\lfloor \frac{s_{n}(k-1)}{s_{n-1}}\right\rfloor =(k-1)+(-1)=k-2.\label{eq:closing1}
\end{equation}

Now,
\[
\left\lfloor \frac{-lk}{s_{n-1}}\right\rfloor =\left\lfloor \frac{-l(k-1)}{s_{n-1}}-\frac{l}{s_{n-1}}\right\rfloor \geq\left\lfloor \frac{-l(k-1)}{s_{n-1}}\right\rfloor +\left\lfloor \frac{-l}{s_{n-1}}\right\rfloor =(-1)+(-1)=-2.
\]
Thus, it follows from Equation~\eqref{eq:1} that
\begin{equation}
\left\lfloor \frac{s_{n}k}{s_{n-1}}\right\rfloor =k-2.\label{eq:closing2}
\end{equation}

Define $\mathbf{e},\bar{\mathbf{e}}\in I_{n-1}^{(\mathbf{s})}$ by $\mathbf{e}=(0,\ldots,0,k)$ and $\bar{\mathbf{e}}=(0,\ldots,0,k-1)$, respectively. Then
\[
\asc\mathbf{e}=1=\asc\bar{\mathbf{e}}.
\]
Equations~\eqref{eq:closing1}~and~\eqref{eq:closing2} yield $\left\lfloor \frac{s_{n}e_{n-1}}{s_{n-1}}\right\rfloor =k-2=\left\lfloor \frac{s_{n}\bar{e}_{n-1}}{s_{n-1}}\right\rfloor $,
so
\[
s_{n}\asc\mathbf{e}-\left\lfloor \frac{s_{n}e_{n-1}}{s_{n-1}}\right\rfloor =s_{n}\asc\bar{\mathbf{e}}-\left\lfloor \frac{s_{n}\bar{e}_{n-1}}{s_{n-1}}\right\rfloor
\]
and, clearly, $s_{n-1}\asc\mathbf{e}-e_{n-1}\neq s_{n-1}\asc\bar{\mathbf{e}}-\bar{e}_{n-1}$. Therefore, using Lemma~\ref{lem:LemKey}, we deduce that $\mathbf{s}$ is not $n$-contractible
if $s_{n}<s_{n-1}-1$.
\end{proof}

\section{Unimodality of $Q_n^{({\bf s})}(x)$}\label{sec:unimodality}

Our goal in this section is to prove Theorem~\ref{ThmUnimodal}. That is, to prove that the polynomial $\Qn(x)$ is unimodal. First, we state a few elementary facts concerning unimodal sequences.

\subsection{Remarks on unimodal sequences.} For a given positive integer $n $, let $( a_1 \leq a_2 \leq a_3 \leq \dots \leq a_t > a_{t+1} \geq a_{t+2} \geq \dots \geq a_n )$ be a unimodal sequence of positive integers, where $t = n$ is possible. By convention, we set $a_j = 0$ if $j \leq 0$ or $j > n$. Fix a positive integer $m \leq n$
 and define $d_i = a_{i + m} - a_{i}$ for all $i$.
Note that $d_{1 - m} = a_1 > 0$ and $d_{n} =-a_n < 0$. We use the notation introduced above throughout this subsection.

\begin{lem}\label{lem:Ron_Fact1} Let $q$ be the least index such that $d_q  = a_{q +m} - a_q < 0$. Then $d_s \leq 0$ for all $s > q$.
\end{lem}

\begin{proof} Assume that $s > q$. Since $d_q = a_{q+m} - a_q <0$, it must be that $t < q$ so that $a_{q+m} \geq a_{s+m}$.
 Now, if $t \leq s$ then $a_s \geq a_{s+m}$ and we are done. Otherwise, $a_q \leq a_s$ and so $a_{s+m} -a_s \leq a_{q+m} -a_q = d_q < 0$, as claimed.
\end{proof}

The point of Lemma~\ref{lem:Ron_Fact1} is that the sequence $d_1, d_1 + d_2, d_1 +d_2 +d_3, \ldots, d_1 + d_2 + \ldots + d_n$ is always unimodal.

We next want to partition the index set $\mathbb{Z}$ into consecutive intervals of length $m$ as follows. Define $J_k = (km+1, km+2, \ldots, km+m)$ for
all $k$. Furthermore, for a positive integer $r \leq m$, we partition each $J_k$ into $r$ non-empty consecutive subintervals. Namely, define
$J_k = J_k(1) \cup J_k(2) \cup \dots \cup J_k(r)$ where $J_k(u)$ has length $l_u$ for some choice of $l_u > 0$ and $\sum_u  l_u = m$.
Let $s_k(u)$ denote the sum $\sum_{i \in J_k(u)} a_i$. Define the difference $D_k(u) = s_k(u)  - s_{k+1}(u)$ for $1 \leq u \leq r$ and all $k$. Let $k_0$ and $u_0$ be the indices such that $q \in J_{k_0}(u_0)$.

The next result is a straightforward consequence of Lemma~\ref{lem:Ron_Fact1}.

\begin{lem}\label{lem:Ron_Fact2} If $i < k_0$, then $D_i(u) \geq 0$, for all $u$; while if $i > k_0$, then $D_i(u) \leq 0$, for all $u$. Also,
if $u < u_0$, then $D_{k_0}(u) \geq 0$; while if $u > u_0$, then $D_{k_0}(u) \leq 0$.
\end{lem}

\subsection{Proof of Theorem~\ref{ThmUnimodal}}

\begin{proof}[Proof of Theorem~\ref{ThmUnimodal}]
From Theorem~\ref{ThmSavage}, we know that
\begin{align} \label{AS}
P_{n-1}^{({\bf s})}(x) = \frac{Q_n^{({\bf s})}(x)}{\ax} = \sum_{{\bf e} \in I_{n-1}^{({\bf s})}} x^{s_n \asc\e -\left \lfloor{\frac{s_n e_{n-1}}{s_{n-1}}} \right\rfloor }.
\end{align}
By Corollary\ref{cor:PensylSavage},
\begin{align}\label{l3}
Q_{n-1}^{({\bf s})}(x) = \sum_{{\bf e} \in I_{n-1}^{({\bf s})}} x^{s_{n-1} \asc\e - e_{n-1}}
\end{align}
In particular, if $s_{n-1} = s_n$ then $P_{n-1}^{({\bf s})}(x) = Q_{n-1}^{({\bf s})}(x).$

The basic idea in proving Theorem~\ref{ThmUnimodal} will be to
``bootstrap'' our way up from $Q_{n-1}^{({\bf s})}(x)$ to $P_{n-1}^{({\bf s})}(x)$ to $Q_n^{({\bf s})}(x)$, using Equations~\eqref{AS} and~\eqref{l3}, and induction on $n$. To start the induction, observe that for $n = 1$ and ${\bf s} = (s_1)$, we have $I_1^{(\s)} = \{0, 1, 2, \ldots, s_1 -1\}$. This implies, by definition of $\asc\e$, that
$Q_1^{(\s)}(x) = 1 + x + x^2+ \ldots + x^{s_1 -1}$,
which is certainly unimodal.

We will assume that $n \geq 2$ and that $Q_{n-1}^{({\bf s})}(x)$ is unimodal for every choice of ${\bf{s}} = (s_1, s_2, \ldots, s_n)$ with all $s_i > 0$. There are three cases.

\noindent{\bf Case 1.} ($s_{n-1} = s_n$): We have noted that in this case, $P_{n-1}^{({\bf s})}(x) = Q_{n-1}^{({\bf s})}(x).$ Thus,
by Equation~\eqref{AS},
\begin{align*}
    Q_n^{({\bf s})}(x) &= P_{n-1}^{({\bf s})}(x) \ax\\
    &= Q_{n-1}^{({\bf s})}(x)  \ax.
\end{align*}
It follows that $Q_{n}^{({\bf s})}(x)$ is unimodal, since $Q_{n-1}^{({\bf s})}(x)$ is unimodal by inductive hypothesis.

\noindent{\bf Case 2.} ($s_{n-1} < s_n$): By Proposition~\ref{prop:1} the polynomials $P_{n-1}^{({\bf s})}(x)$ and $Q_{n-1}^{({\bf s})}(x)$ have the same sequence of nonzero coefficients, the difference being that $P_{n-1}^{({\bf s})}(x)$ has occasional gaps in its terms. For example,
for the sequence ${\bf s} = (1,2,4,6)$, we have
\begin{align*}
Q_{3}^{({\bf s})}(x) &= x^5 + x^4 + 2x^3 + 2x^2 + x + 1, \quad\textnormal{and}\\
P_{3}^{({\bf s})}(x) &= x^8 + x^6 + 2x^5 + 2x^3 +x^2+1.
\end{align*}
The reason for this is the following. Suppose the coefficient of $x^t$ in $Q_{n-1}^{({\bf s})}(x)$ is $c(t)$. This means that
there are $c(t)~ {\bf e}$'s in $I_{n-1}^{({\bf s})}$ with $t = s_{n-1}\asc\e - e_{n-1}$. Thus, if $\asc\e= a$ and $e_{n-1} = b$, then $t = s_{n-1} a - b$. However, since $0 \leq e_{n-1} < s_{n-1}$, then $a$ and $b$ are uniquely determined by $t$. As a consequence, the coefficient of $x^{s_{n} \asc\e -\left \lfloor{\frac{s_n e_{n-1}}{s_{n-1}}} \right\rfloor }
=x^{s_n a  -\left \lfloor{\frac{s_n b}{s_{n-1}}} \right\rfloor }$ in $P_{n-1}^{({\bf s})}(x)$ is also $c(t)$.
This is because $\frac{s_n b}{s_{n-1} } =  \frac{s_n e_{n-1}}{s_{n-1} }< s_n$ implies that
\begin{align*}
t = s_n a - \left \lfloor{\frac{s_n b}{s_{n-1}}} \right\rfloor  =  s_n a' - \left \lfloor{\frac{s_n b'}{s_{n-1}}} \right\rfloor
\end{align*}
if and only if $a = a'$ and $b = b'$.

What about the missing terms in  $P_{n-1}^{({\bf s})}(x)$? These are the powers of $x$ which are in forbidden residue classes
modulo $s_n$. For instance, consider the example above with  ${\bf s} = (1,2,4,6)$.
There are no missing terms in $Q_{3}^{({\bf s})}(x) = x^5 + x^4 + 2x^3 + 2x^2 + x + 1$ since $Q_{3}^{({\bf s})}(x) $ is unimodal.
All the exponents for powers of $x$ in $P_{3}^{({\bf s})}(x) $ have the form $6a -\left \lfloor \frac{6e_3}{4} \right\rfloor$.
Thus, as $e_3$ assumes the values $\{0, 1, 2, 3\}$, then $\left \lfloor \frac{6e_3}{4} \right\rfloor$ assumes the values $\{0, 1, 3, 4\}$.
Hence, the only exponents for powers of $x$ that appear in $P_{3}^{({\bf s})}(x) $ are congruent to $\{0, -1, -3, -4 \} = \{0, 2, 3, 5\} \pmod 6$.
In other words, the coefficient $c(t)$ of $x^t$ is 0 if $t \equiv 1~ \text{or}~ 4 \pmod 6$.

In general, for ${\bf s} = (s_1, s_2, \ldots)$, the only exponents of powers of $x$ in  $P_{n-1}^{({\bf s})}(x)$ have the
form $s_n a - \left \lfloor \frac{s_n e_{n-1}}{s_{n-1}} \right\rfloor$ for $0 \leq e_{n-1} < s_{n-1}$. This leaves
$s_n - s_{n-1}$ residue classes modulo $s_n$ for the powers of $x$ that are missing in $P_{n-1}^{({\bf s})}(x)$.

Let us now see what happens when we form
\begin{equation*}
Q_n^{({\bf s})}(x) = P_{n-1}^{({\bf s})}(x) \ax.
\end{equation*}
Multiplying  $P_{n-1}^{({\bf s})}(x)$   by $\ax$ results in the coefficients of $Q_n^{({\bf s})}(x)$
being {\it interval sums} (of length $s_n$) of the coefficients of $P_{n-1}^{({\bf s})}(x)$. For instance, going back to our example with  ${\bf s} = (1, 2, 4, 6)$, we have $P_{3}^{({\bf s})}(x) = x^8 + x^6 + 2x^5 + 2x^3 +x^2+1$.
Hence, the coefficient sequence is
\[
(\ldots, 0, 0, 0, 1, {\bf 0}, 1, 2, {\bf 0} , 2, 1, {\bf 0}, 1, 0, 0, \ldots),
\]
where we have written the internal $0$'s in bold as ${\bf 0}$. If we now take the interval sums of length 6 for this sequence,
we obtain the sequence $( \ldots, 0, 0, 1, 1, 2, 4, 4, 6, 6, 6, 6, 4, 4, 2, 1, 1, 0, 0, \ldots)$. Thus,
\begin{align*}
Q_4^{(1,2,4,6)}(x)&=x^{13}+x^{12}+2x^{11}+4x^{10}+4x^9+6x^8+6x^7\\
&+6x^6+6x^5+4x^4+4x^3+2x^2+x+1.
\end{align*}

The reason that the coefficient sequence is unimodal comes from Lemma~\ref{lem:Ron_Fact1}. The change in going from an interval sum
$c(i)+c(i+1)+c(i+2)+c(i+3)+c(i+4)+c(i+5)$ to the next interval sum $c(i+1)+c(i+2)+c(i+3)+c(i+4)+c(i+5)+c(i+6)$, where we
assume that $i \geq 0$, is
just the difference $c(i+6) - c(i)$. We know that whenever $c(i) = {\bf 0}$ then this difference must be 0. Furthermore, every
interval sum of length 6 must contain exactly two {\bf 0}'s. Consequently, the sums you get are just the same sums you would get
by taking interval sums of length 4 for the coefficients of $Q_{3}^{(1,2,4)}(x) = x^5 + x^4 + 2x^3 + 2x^2 + x + 1$, but with periodic repeated coefficients because of the {\bf 0}'s. In this case,
\begin{align*}
Q_{3}^{(1,2,4)}(x)[4]_{x} = x^8+2x^7+4x^6+6x^5+6x^4+6x^3+4x^2+2x+1.
\end{align*}
In particular, Lemma~\ref{lem:Ron_Fact1} and induction imply that $Q_4^{(1,2,4,6)}(x)$ is unimodal.

The general argument follows in the same way. Namely, each interval sum of length $s_n$ for the coefficients
of  $P_{n-1}^{({\bf s})}(x)$ has exactly $s_n - s_{n-1}$ {\bf 0}'s. Furthermore, all the coefficients in these
$s_n - s_{n-1}$ residue classes modulo $s_n$ are 0. Hence, when we take interval sums of length $s_n$
for  $P_{n-1}^{({\bf s})}(x)$, we get the same coefficients as taking interval sums of length $s_{n-1}$
for the coefficients of $Q_{n-1}^{({\bf s})}(x)$, again with periodic repetitions because of the {\bf 0}'s.
By Lemma~\ref{lem:Ron_Fact1}, and by the inductive assumption that $Q_{n-1}^{({\bf s})}(x)$ is unimodal, we see that
$Q_{n-1}^{({\bf s})}(x) \ax$ is unimodal. Finally, it is easy to see
that unimodality is preserved with insertion of occasional repeated terms in the sequence.
The point is that our coefficient sequence is formed by starting at 0, adding nonnegative terms for a while
to get subsequent terms, and then at some point adding nonpositive terms to get the remaining terms of the sequence.
This shows that $Q_{n}^{({\bf s})}(x)$ is also unimodal. This completes the proof for Case 2.

\noindent{\bf Case 3.} ($s_{n-1} > s_n$): As before, we begin with an example. Let us take ${\bf s} = (5,7,3)$.
In this case, we find that
\begin{align*}
 Q_{2}^{({\bf s})}(x) &= x^{12}+2x^{11}+2x^{10}+3x^9+4x^8+4x^7+5x^6+4x^5+3x^4+3x^3+2x^2+x+1,\quad\textnormal{and}\\
P_{2}^{({\bf s})}(x) &= x^6+4x^5+7x^4+13x^3+6x^2+3x+1.
\end{align*}

How did the coefficients from $P_{2}^{({\bf s})}(x)$ arise from those in $Q_{2}^{({\bf s})}(x)$?
If we write the coefficients of $Q_{2}^{({\bf s})}(x)$ as a sequence ${\bf w}=(1,2,2,3,4,4,5,4,3,3,2,1,1)$ then the coefficients of
$P_{2}^{({\bf s})}(x)$ are formed by taking certain consecutive interval sums from ${\bf w}$ as follows:
\begin{align*}
{\bf 1} =1, \;{\bf 4} = 2+2, \;{\bf 7} = 3+4, \;{\bf 13} = 3+5+4, \;{\bf 6} = 3+3, \;{\bf 3} = 2+1, \;{\bf 1} = 1.
\end{align*}

Why is this so? As we saw in Case 2, a term {\bf e} $\in I_2^{({\bf s})}$ with $\asc\e= a$ and $e_{2} = b$ will contribute to the coefficient $c(t)$ of $x^t$ in $Q_{2}^{({\bf s})}(x)$ where $t = 7a - b$.
This {\bf e} will also
contribute to the coefficient of $x^u$ in $P_{2}^{({\bf s})}(x)$ where $u = 3 a  -\left \lfloor \frac{3b}{7} \right\rfloor$.
In fact, for $b = 0,1$ and 2, the value of $ \left \lfloor \frac{3b}{7} \right\rfloor$ is constant (and equal to 0).
Thus, the coefficient of $x^u$ in $P_{2}^{({\bf s})}(x)$ will be the sum of the three coefficients  $c(7a), c(7a-1)$
and $c(7a-2)$ in $Q_{2}^{({\bf s})}(x)$. Similarly, $ \left \lfloor \frac{3b}{7} \right\rfloor = 1$ for $b=3$ and $4$,
and $ \left \lfloor \frac{3b}{7} \right\rfloor = 2$ for $b=5$ and $6$. This partition of the interval $\{0,1,2,3,4,5,6\}$ into 3 subintervals
$\{0,1,2\}, \{3,4\}, \{5,6\}$ is repeated modulo 7. Remembering that we are subtracting $ \left \lfloor \frac{3b}{7} \right\rfloor = 1$ for $b=3$ and $4$
when computing the contributions to the coefficients, we see for example that the coefficient for $13x^3$ in
$P_{2}^{({\bf s})}(x)$ comes from the three terms $4x^7 +5x^6 +4x^5$ in $Q_{2}^{({\bf s})}(x)$,
since $3 = 3\cdot 1 - 0$ and $7 = 7 \cdot 1 -0, 6 = 7 \cdot 1 - 1$ and $5 = 7 \cdot 1 - 2$. All the other coefficients
arise in a similar manner.

The general argument follows in the same way. Namely, the interval $\{0,1,2, \ldots, s_{n-1} \}$ is partitioned
into $s_n$ consecutive subintervals, depending on the value of $ \left \lfloor \frac{s_n b}{s_{n-1}} \right\rfloor $. This results in an accumulation of consecutive coefficient values from $Q_{n-1}^{({\bf s})}(x)$ when computing
the coefficients of $P_{n-1}^{({\bf s})}(x)$. Now, to compute $Q_{n}^{({\bf s})}(x) = P_{n-1}^{({\bf s})}(x) \ax$,
we take interval sums of length $s_n$ of the coefficients of $P_{n-1}^{({\bf s})}(x)$. This is where Lemma~\ref{lem:Ron_Fact2} comes in.
It guarantees that the change in computing consecutive interval values is always nonnegative for a while and
then at some point it becomes nonpositive for the remaining terms. This proves the unimodality of $Q_{n}^{({\bf s})}(x)$ for Case~3.
\end{proof}

\section{When is $Q_n^{({\bf s})}(x)$ palindromic?}\label{sec:palindromicity}

In this section we provide a characterization of sequences $\s$ for which $Q_n^{({\bf s})}(x)$ is palindromic.

Recall that a polynomial $P(x) = \sum_{k=0}^n a_k x^k$ is palindromic if $a_k = a_{n-k}$ for $0 \leq k \leq n$, where we assume that $a_n  > 0$ and $a_0 \neq 0$. Thus, a $P(x)$ of degree $n$ is palindromic if and only if
\begin{equation*}
P(x) = x^n P(1/x).
\end{equation*}

\begin{lem}\label{lem:Ron_Fact3} Let $P(x)$ and $Q(x)$ be polynomials and let $R(x) = P(x) Q(x)$. If any two of these polynomials are palindromic then the third polynomial is also palindromic.
\end{lem}

\begin{proof} Clearly, $\deg\left(R(x)\right)=\deg\left(P(x)\right)+\deg\left(Q(x)\right)$. If $P(x)$ and $Q(x)$ are palindromic, then
\begin{align*}
R(x) &= P(x)Q(x)\\
&= x^{\deg(P(x))}P(1/x)x^{\deg(Q(x))}Q(1/x)\\
&= x^{\deg(R(x))}R(1/x),
\end{align*}
so $R(x)$ is also palindromic. Now, if $R(x)$ and $P(x)$ are palindromic, then
\begin{align*}
P(x)Q(x)=R(x)&=x^{\deg(R(x))}R(1/x)\\
&=x^{\deg(P(x))+\deg(Q(x))}P(1/x)Q(1/x)\\
&=P(x)x^{\deg(Q(x))}Q(1/x).
\end{align*}
Thus, $Q(x)=x^{\deg(Q(x))}Q(1/x)$, which means that $Q(x)$ is palindromic. By symmetry, if $R(x)$ and $Q(x)$ are palindromic, then so is $P(x)$.
\end{proof}

By Corollary~\ref{cor:PensylSavage}, we know that the degree of $Q_{n}^{({\bf s})}(x)$ will be the maximum value of $s_{n} \asc\e - e_{n}$, for $e\in\In$. To maximize the degree of $x$ in $Q_{n}^{({\bf s})}(x)$, it is always best to use the largest possible value of $\asc\e$. This means that
\begin{equation}\label{eq:e_star}
\deg\left(\Qn(x)\right)=s_{n} \asc{\bf e}^{*} - e^{*}_{n},
\end{equation}
where ${\bf e}^{*}$ denotes the element of $I_{n}^{({\bf s})}$ that achieves the largest possible value of $\asc{\bf e}$ and such that $e^{*}_{n}\leq e_{n}$ for all $\e\in\In$ satisfying $\asc\e=\asc{\bf e}^{*}$. If $1\leq j<n$, then $(e^{*}_1,e^{*}_2,\ldots,e^{*}_j)$ is the element in $\e\in I^{}_{j}$ which maximizes $s_{j} \asc\e - e_{j}$, so
\[
\deg\left(Q^{(\s)}_{j}(x)\right)=s_{j} \asc(e^{*}_1,e^{*}_2,\ldots,e^{*}_j) - e^{*}_{j}.
\]

Throughout the rest of the paper, $\e^{*}$ will be as in Equation~\eqref{eq:e_star}, unless otherwise stated.

\begin{thm} \label{thm:palindromic}
$Q_1^{({\bf s})}$ is palindromic for all ${\bf s}$.  For $n \geq 2$, write $\frac{s_n}{s_{n-1}} = \frac{p}{q}$ where $\gcd(p,q) = 1$.
Then $Q_{n}^{({\bf s})}(x)$ is palindromic if and only if $Q_{n-1}^{({\bf s})}(x)$
is palindromic  and $p e^{*}_{n-1} \equiv -1 \pmod q$.
\end{thm}

\begin{proof} The proof for $n=1$ is immediate since it is not hard to see that in this case
\begin{align*}
Q_{1}^{({\bf s})}(x) = 1+x+x^2+\ldots +x^{s_1-1},
\end{align*}
which is certainly palindromic. As in the proof of Theorem~\ref{ThmUnimodal}, we assume by induction that $Q_{n-1}^{({\bf s})}(x)$ is palindromic and we consider three cases.

\noindent{\bf Case 1.}  ($s_n = s_{n-1}$): In this case we have by Equations~\eqref{AS} and~\eqref{l3},
\begin{align*}
P_{n-1}^{({\bf s})}(x) &= \sum_{{\bf e} \in I_{n-1}^{({\bf s})}} x^{s_n \asc\e -\left \lfloor{\frac{s_n e_{n-1}}{s_{n-1}}} \right \rfloor }\\
&= \sum_{{\bf e} \in I_{n-1}^{({\bf s})}} x^{s_{n-1} \asc\e - e_{n-1}} = Q_{n-1}^{({\bf s})}(x).
\end{align*}
Hence,
\begin{align*}
Q_{n}^{({\bf s})}(x) &= P_{n-1}^{({\bf s})}(x) \ax\\
&=Q_{n-1}^{({\bf s})}(x) \ax,
\end{align*}
which is a product of palindromic polynomials and so, by Lemma~\ref{lem:Ron_Fact3}, it is also palindromic.

\noindent{\bf Case 2.} ($s_n > s_{n-1}$): The degree of $Q_{n-1}^{({\bf s})}(x)$ is $d=s_{n-1} \asc(e^{*}_1,e^{*}_2,\ldots,e^{*}_{n-1}) - e^{*}_{n-1}$.
Since $Q_{n-1}^{({\bf s})}(x)$ is palindromic, any term of the form $c(t)x^t$ has a symmetrical mate $c(t) x^{d-t}$,
where $t=s_{n-1} a - b$ and $d-t = s_{n-1} a' - b'$. These two terms will correspond to the terms
$s_n a - \left \lfloor {\frac{s_n b}{s_{n-1}}} \right \rfloor$ and $s_n a' - \left \lfloor {\frac{s_n b'}{s_{n-1}}} \right \rfloor$
in $P_{n-1}^{({\bf s})}(x)$. Since $P_{n-1}^{({\bf s})}(x)$ has degree $s_n \asc(e^{*}_1,e^{*}_2,\ldots,e^{*}_{n-1}) - \left \lfloor{\frac{s_n e^{*}_{n-1}}{s_{n-1}}} \right\rfloor$
then $P_{n-1}^{({\bf s})}(x)$ will be palindromic provided
\begin{align*}
s_n a - \left \lfloor {\frac{s_n b}{s_{n-1}}} \right \rfloor + s_n a' - \left \lfloor {\frac{s_n b'}{s_{n-1}}} \right \rfloor  =
s_n \asc(e^{*}_1,e^{*}_2,\ldots,e^{*}_{n-1}) - \left \lfloor{\frac{s_n e^{*}_{n-1}}{s_{n-1}}} \right\rfloor.
\end{align*}
Here, either $a + a' = \asc(e^{*}_1,e^{*}_2,\ldots,e^{*}_{n-1})$ and $b+b' = e^{*}_{n-1}$, or $a + a' = \asc(e^{*}_1,e^{*}_2,\ldots,e^{*}_{n-1}) -1$ and $b+b' = e^{*}_{n-1} - s_{n-1}$. In either case, in order for $P_{n-1}^{({\bf s})}(x)$ to be palindromic, we need
\begin{align} \label{palin2}
 \left \lfloor {\frac{s_n b}{s_{n-1}}} \right \rfloor +  \left \lfloor {\frac{s_n (e^{*}_{n-1} - b)}{s_{n-1}}} \right \rfloor  = \left \lfloor{\frac{s_n e^{*}_{n-1}}{s_{n-1}}} \right\rfloor
\end{align}
for all $b$. Let us reduce the fraction $\frac{s_n}{s_{n-1}} = \frac{p}{q}$ to lowest terms, so that $\gcd(p,q) = 1$. Thus, Equation~\eqref{palin2} becomes
\begin{align} \label{palin3}
 \left \lfloor {\frac{p b}{q}} \right \rfloor +  \left \lfloor {\frac{p (e^{*}_{n-1} - b)}{q}} \right \rfloor  = \left \lfloor{\frac{p e^{*}_{n-1}}{q}} \right\rfloor.
\end{align}
Setting $t = pb$ and $p e^{*}_{n-1} = mq+r$, with $0 \leq r < q$, Equation~\eqref{palin3} becomes
\begin{align} \label{palin4}
 \left \lfloor {\frac{t}{q}} \right \rfloor +  \left \lfloor {\frac{r-t}{q}} \right \rfloor  = 0
 \end{align}
for $t \in \mathbb{Z}$. However, this can hold if and only if $r \equiv p e^{*}_{n-1} \equiv -1 \pmod q$. This completes Case 2.

\noindent{\bf Case 3.} ($s_n < s_{n-1}$): In fact, this case is quite similar to Case 2. The main difference is that in generating
the coefficients of $P_{n-1}^{({\bf s})}(x)$ from those of $Q_{n-1}^{({\bf s})}(x)$, instead of having missing powers of $x$,
the coefficients of $P_{n-1}^{({\bf s})}(x)$ are (interval) sums of coefficients of $Q_{n-1}^{({\bf s})}(x)$, as in Case~3 of the proof of Theorem~\ref{ThmUnimodal}. This is because the exponent $s_{n} \asc{\bf e} - \left \lfloor {\frac{s_n e_{n-1}}{s_{n-1}}} \right \rfloor$
only changes (as $e_{n-1}$ moves) when $s_n e_{n-1}$ goes past a new multiple of $s_{n-1}$. For example, if $s_n= 3, s_{n-1}=8$ then
the exponent is the same for $e_{n-1} = 0, 1$ and $2$, provided we have the same value of $\asc{\bf e}$. Now we can use the same
argument as before to argue that the palindromicity of $Q_{n-1}^{({\bf s})}(x)$ implies the palindromicity of $P_{n-1}^{({\bf s})}(x)$ provided Equation~\eqref{palin4} holds. As in Case 2, it is necessary and sufficient that $p e^{*}_{n-1} \equiv -1 \pmod q$. This completes the proof of Case 3 and so, Theorem~\ref{thm:palindromic} is proved.
\end{proof}

In particular, Theorem~\ref{thm:palindromic} implies that $T_{n}(x)=Q_{n}^{(1,2,\ldots,n)}(x)$ is palindromic for any $n\geq 1$, which was remarked by Chung and Graham~\cite{ChungGraham2013}.

\begin{cor}[Chung, Graham~\cite{ChungGraham2013}]\label{cor:1,2,3_palindromic} Let $n\geq 1$. Then $T_{n}(x)=Q_{n}^{(1,2,\ldots,n)}(x)$ is palindromic.
\end{cor}
\begin{proof} We proceed by induction on $n$. Clearly, $T_{1}(x)=1$ is palindromic. Suppose that $T_{n-1}(x)$ is palindromic. By Theorem~\ref{thm:palindromic}, it suffices to show that $n e^{*}_{n-1} \equiv -1 \pmod {n-1}$. The inversion sequence $\e\in I_{n}^{(1,2,\ldots ,n)}$ which maximizes $n\asc\e -e_{n}$ is ${\bf e}^{*}=(0,1,\ldots, n-1)$, so $e^{*}_{n-1}=n-2$. Hence, $n e^{*}_{n-1}=n(n-2)$ and since $n^{2}-2n+1=(n-1)^{2}$, it follows that $n e^{*}_{n-1} \equiv -1 \pmod {n-1}$.
\end{proof}

\section{Another characterization of sequences $\s$ for which $\Qn (x)$ is palindromic}\label{sec:Gorenstein_cond_equiv}

Beck et al.~\cite[Thm.~4.1]{GorensteinPaper} observe that $Q_n^{({\bf s})}(x)$, defined as a generating function for the lattice points of the cone ${\bf C}^{(\s)}_{n}$, as in Equation~\eqref{eq:FirstDefQ}, is palindromic if and only if ${\bf C}^{(\s)}_{n}$ is Gorenstein, see~\cite[p.~5]{GorensteinPaper} for the definition. They also provide a characterization for ${\bf C}^{(\s)}_{n}$ being Gorenstein~\cite[Cor.~2.4]{GorensteinPaper}, which yields the following result.

\begin{thm}[Beck et al.~\cite{GorensteinPaper}]\label{thm:Beck_et_al} Let $\s$ be a sequence of positive integers and let ${\bf c}=(c_1,c_2,\ldots,c_n)$ be the sequence defined by $c_1=1$ and
\begin{equation*}
c_{i}s_{i-1} = c_{i-1}s_{i}+\gcd(s_{i-1},s_{i})
\end{equation*}
for $i>1$. Then the polynomial $Q_n^{({\bf s})}(x)$ is palindromic if and only if ${\bf c}$ consists of integer entries.
\end{thm}

Since the conditions from Theorems~\ref{thm:palindromic} and~\ref{thm:Beck_et_al} are both characterizations of the palindromicity of $\Qn(x)$, they must be equivalent to each other.

We now provide a direct proof of this fact. Suppose that we are given a sequence $(s_1,s_2,\ldots,s_{n-1})$ for which $Q_{n-1}^{(s_1,s_2,\ldots,s_{n-1})}(x)$ is palindromic. We would like to know when this sequence can be extended to $(s_1,s_2,\ldots,s_{n-1},s_{n})$ so that $Q_{n}^{(s_1,s_2,\ldots,s_{n})}(x)$ is palindromic. The condition from Theorem~\ref{thm:palindromic} is that
\begin{equation}\label{eq:our_condition}
e^{*}_{n-1}p +1\equiv 0 \pmod q,
\end{equation}
where
\[
\frac{s_{n}}{s_{n-1}}=\frac{p}{q}\quad\textnormal{and}\quad\gcd(p,q)=1.
\]
Now, the condition from Theorem~\ref{thm:Beck_et_al} depends on the sequence ${\bf c}$ consisting of integer sequences. Assuming by induction that the entries of $(c_1,c_2,\ldots,c_{n-1})$ are all integers, we see that
\[
c_{n}=\frac{1}{s_{n-1}} (c_{n-1}s_{n}+\gcd(s_{n-1},s_{n}))
\]
is an integer if and only if
\begin{equation}\label{eq:c_n_equiv}
c_{n-1}p +1\equiv 0 \pmod q.
\end{equation}
The two conditions~\eqref{eq:our_condition} and~\eqref{eq:c_n_equiv} are equivalent if and only if
\begin{equation}\label{eq:e_c_equiv}
e^{*}_{n-1} \equiv c_{n-1} \pmod q.
\end{equation}

We see that, in order to prove that the conditions from Theorems~\ref{thm:palindromic} and~\ref{thm:Beck_et_al} are equivalent, it suffices to verify that Equation~\eqref{eq:e_c_equiv} holds whenever $(c_1,c_2,\ldots,c_{n-1})$ consists of integer entries. We will prove a stronger result, namely, that
\[
e^{*}_{n-1} \equiv c_{n-1} \pmod {s_{n-1}}.
\]
To do this, we need a few technical lemmas.

\begin{lem}\label{fact:2} Suppose that $n\geq 2$ and that ${\bf c}$ consists of integer entries. Assume that $e^{*}_{n-1}\equiv c_{n-1}\pmod {s_{n-1}}$. Then the choice of $x_n$ satisfying
\[
x_{n}s_{n-1} = e^{*}_{n-1}s_n +\gcd(s_{n-1},s_n)
\]
yields the smallest integer $x_{n}$ such that $\frac{e_{n-1}}{s_{n-1}}<\frac{x_{n}}{s_{n}}$.
\end{lem}
\begin{proof}
Note that
\[
\frac{e^{*}_{n-1}}{s_{n_1}} < \frac{e^{*}_{n-1}}{s_{n-1}}+\frac{\gcd(s_{n-1}s_{n})}{s_{n-1}s_{n}} = \frac{x_{n}s_{n}}{s_{n-1}s_{n}} = \frac{x_n}{s_n}.
\]
Now, $(x_{n}-1)s_{n-1}=e^{*}_{n-1}-s_{n-1}+\gcd(s_{n-1},s_{n})$, so
\[
\frac{e^{*}_{n-1}}{s_{n-1}} \geq \frac{e^{*}_{n-1}}{s_{n-1}}+\left(\frac{\gcd(s_{n-1},s_{n})}{s_{n-1}s_{n}}-\frac{1}{s_{n}}\right) = \frac{x_{n}-1}{s_{n}},
\]
where the inequality follows from the fact that $s_{n-1}\geq \gcd(s_{n-1},s_{n})$. Hence, it suffices to show that $x_{n}$ is an integer to conclude the proof.

Since $e^{*}_{n-1}\equiv c_{n-1}\pmod {s_{n-1}}$, there exists an integer $b$ such that $s_{n-1}b = e^{*}_{n-1}-c_{n-1}$ and so, $c_{n}s_{n-1} = c_{n-1}s_{n}+\gcd(s_{n-1},s_{n})$ implies that
\begin{align*}
s_{n-1}s_{n}b+c_{n}s_{n-1} &= s_{n-1}s_{n}b+c_{n-1}s_{n}+\gcd(s_{n-1},s_{n})\\
&= e^{*}_{n-1}s_{n}+\gcd(s_{n-1},s_{n}).
\end{align*}
We conclude that $e^{*}_{n-1}s_{n}+\gcd(s_{n-1},s_{n})$ is divisible by $s_{n-1}$.
\end{proof}

Henceforth, $x_{n}$ is as in Lemma~\ref{fact:2}. Observe that, for $n\geq 2$, the integer $x_{n}$ satisfies $1\leq x_{n}\leq s_{n}$. Indeed, since $e^{*}_{n-1}<s_{n-1}$, then $x_{n}s_{n-1}-\gcd(s_{n-1},s_{n})<s_{n}s_{n-1}$. Thus, $x_{n}<s_{n}+\frac{\gcd(s_{n-1},s_{n})}{s_{n-1}}\leq s_{n}+1$.

The next lemma explains how the entries of the sequence $\e^{*}$ are related to the integers $x_{i}$.

\begin{lem}\label{lem:Carla_Observation2}  Suppose that ${\bf c}$ consists of integer entries. Then
\[
e^{*}_n = \left\{  \begin{array}{lll}
1 & \mbox{if  $n=1$ and $s_1 > 1$;} \\
0 & \mbox{if $s_n = 1$ or $x_n=s_n$;}\\
x_n &\mbox{otherwise.}
\end{array}
\right .
\]
\end{lem}

\begin{proof}
The case $n=1$ is clear. Assume $n>1$. If $x_{n} = s_n$, then Lemma~\ref{fact:2} implies that $n-1$ cannot be an ascent of $\e^{*}$. By definition, $\e^{*}$ is the choice of $\e\in\In$ that maximizes $s_{n}\asc\e-e_{n}$, so it must be that $e^{*}_{n}=0$.

Now, if $x_{n}<s_{n}$, then Lemma~\ref{fact:2} and the definition of $\e^{*}$ imply that $n-1$ is an ascent of $\e^{*}$ and that $e^{*}_{n}=x_{n}$
\end{proof}

\begin{prop}\label{prop:equiv_charact} Suppose that the sequence ${\bf c}$ consists of integer entries. Then $e^{*}_{n} \equiv c_{n} \pmod {s_{n}}$.
\end{prop}

\begin{proof} We proceed by induction on $n$. The case $n=1$ is clear because, by Lemma~\ref{lem:Carla_Observation2}, $e^{*}_{1},c_{1}=1$ if $s_{1}>1$; and $e^{*}_{1}=0,~c_{1}=1$ if $s_{1}=1$. Consider $n>1$. Suppose that if $(c_1,c_2,\ldots,c_{n-1})$ consists of integer sequences, then $e^{*}_{n-1}\equiv c_{n-1} \pmod {s_{n-1}}$.

Assume that the sequence ${\bf c}$ consists of integer entries. By our inductive hypothesis, we know that $e^{*}_{n-1}\equiv c_{n-1} \pmod {s_{n-1}}$. There are two cases.

\noindent\textbf{Case 1.} ($x_{n}<s_{n}$): It follows from Lemmas~\ref{fact:2} and~\ref{lem:Carla_Observation2} that $e^{*}_{n}s_{n-1} = e^{*}_{n-1}s_n +\gcd(s_{n-1},s_n)$. Together with the fact that $c_{n}s_{n-1} = c_{n-1}s_{n} + \gcd(s_{n-1},s_{n})$, this implies that $(e^{*}_{n} - c_{n})s_{n-1} = (e^{*}_{n-1} - c_{n-1})s_{n}$. That is,
\[
\frac{e^{*}_{n} - c_{n}}{s_n} = \frac{e^{*}_{n-1} - c_{n-1}}{s_{n-1}}.
\]
Now, $e^{*}_{n-1} \equiv c_{n-1} \pmod {s_{n-1}}$, so the fraction on the right-hand side is an integer and consequently, so is the one on the left-hand side. We conclude that $e^{*}_{n} \equiv c_{n} \pmod {s_{n}}$.

\noindent\textbf{Case 2.} ($x_{n}=s_{n}$): By Lemma~\ref{fact:2}, $s_{n}s_{n-1} = e^{*}_{n-1}s_n +\gcd(s_{n-1},s_n)$. Given that $c_{n}s_{n-1} = c_{n-1}s_{n} + \gcd(s_{n-1},s_{n})$ and $e^{*}_{n-1}\equiv c_{n-1} \pmod {s_{n-1}}$, we know that there exists an integer $b$ such that
\[
c_{n}s_{n-1} = (e^{*}_{n-1}+s_{n-1}b)s_{n} + \gcd(s_{n-1},s_{n}) = (s_{n-1} + s_{n-1}b)s_{n}.
\]
By Lemma~\ref{lem:Carla_Observation2}, $e^{*}_{n}=0$, so $c_{n}\equiv 0=e^{*}_{n} \pmod {s_{n}}$.
\end{proof}

As we mentioned before, Proposition~\ref{prop:equiv_charact} provides a direct proof of the equivalence between the conditions from Theorems~\ref{thm:palindromic} and~\ref{thm:Beck_et_al}, which are both characterizations of palindromicity of $\Qn(x)$. In addition, it shows that the sequences $\e^{*}$ and ${\bf c}$ are linked, namely, by the equivalence $e^{*}_{n} \equiv c_{n} \pmod {s_{n}}$. We now provide an inductive construction of the integers $r_{n}$ such that $c_{n}=e^{*}_{n}+r_{n}s_{n}$.

\begin{prop}
Let $\s$ be a sequence of positive integers. Define the integer sequence $(r_0, r_1, \ldots)$ by
\[
r_i = \left \{ \begin{array}{ll}
0 & \mbox{if $i=0$,}\\
1+r_{i-1} & \mbox{if $e^{*}_{i}=0$,}\\
r_{i-1} & \mbox{otherwise.}
\end{array} \right .
\]
Suppose that ${\bf c}$ consists of integer entries. Then $\e^{*}$ and ${\bf c}$ are related by
\[
c_n = e^{*}_n + r_{n}s_{n}.
\]
\end{prop}

\begin{proof} We proceed by induction on $n$. For $n=1$,  since $c_1=1$, we need to show that $1=e^{*}_1+r_1s_1$. If $s_1=1$, then, by Lemma~\ref{lem:Carla_Observation2}, $e^{*}_1=0$.  Thus, $r_1=r_0+1=1$ and so, the equality $1=e^{*}_1+r_1s_1$ holds. Otherwise, $s_1>1$, so by Lemma~\ref{lem:Carla_Observation2}, $e^{*}_1=1$ and $r_1=r_0=0$. Hence, again  $1=e^{*}_1+r_1s_1$.

Assume that $n>1$ and that $c_{n-1} = e^{*}_{n-1} + r_{n-1}s_{n-1}$. Write
\begin{eqnarray*}
c_{n}s_{n-1} & = & c_{n-1}s_n + \gcd(s_{n-1},s_{n})\\
& = & s_n(e^{*}_{n-1} + r_{n-1}s_{n-1}) + \gcd(s_{n-1},s_{n})\\
& = & x_{n}s_{n-1} -  \gcd(s_{n-1},s_{n}) + s_{n}r_{n-1}s_{n-1} + \gcd(s_{n-1},s_{n}),
\end{eqnarray*}
where the first equality is by definition of $c_n$, the second equality follows from our inductive hypothesis, and the last inequality follows by definition of $x_n$. Thus,
\[
c_n = x_n + r_{n-1}s_n.
\]
If $e^{*}_n=0$, then, since $n>1$, we deduce that $x_n=s_n$ and $r_n=r_{n-1}+1$, so
\[
c_n = s_n + r_{n-1}s_n = s_n(r_{n-1}+1) = s_nr_n= e^{*}_n+s_nr_n.
\]
Otherwise, $e^{*}_n \not = 0$, so $r_n = r_{n-1}$ and by Lemma~\ref{lem:Carla_Observation2}, $e^{*}_n=x_n$.  So,
\[
c_n = e^{*}_n + r_{n-1}s_n = e^{*}_n + r_ns_n. \qedhere
\]
\end{proof}

\section{Concluding remarks}\label{sec:concluding_remarks}

One might wonder if something stronger could be said about the coefficient sequences of $Q_{n}^{({\bf s})}(x)$.

We say that a sequence $(a_0,a_1,\ldots,a_n)$ is {\it logarithmically concave}, or {\it log-concave} for short, if $a_{i-1}a_{i+1}\leq a_{i}^{2}$ for all $0<i<n$. A polynomial $P(x)=\sum_{k=0}^{n}a_{k}x^{k}$ is {\it log-concave} if its coefficient sequence $(a_0,a_1,\ldots,a_n)$ is log-concave. Clearly, a log-concave sequence of positive terms is unimodal, so given that the polynomials $Q_{n}^{({\bf s})}(x)$ are unimodal, it is natural to ask: are the polynomials $Q_{n}^{({\bf s})}(x)$ is log-concave in general? The answer is ``No, not always'' as the example of {\bf s} $=(1,3,4)$ shows. For this
choice of {\bf s}, we find  $Q_{3}^{(1,3,4)}(x) = x^6+2x^5+2x^4+3x^3+2x^2+x+1$,
which is neither palindromic nor log-concave.

Is it possible to characterize the integer sequences $\s$ for which $Q_{n}^{({\bf s})}(x)$ is log-concave?

\bibliographystyle{amsplain}


\end{document}